\documentclass[reqno]{amsart}
\usepackage{latexsym, amsmath, amscd, amssymb, amsthm, bm, mathrsfs}
\usepackage{amsrefs}
  \usepackage{multicol}
  \usepackage{graphicx} 
  \usepackage{xcolor}
  \usepackage[colorlinks=true,linkcolor=blue,urlcolor=blue,
  citecolor=blue]{hyperref}
  
  \newtheorem{lemma}{Lemma}[section]
\newtheorem{thm}[lemma]{Theorem}
\newtheorem{theorem}[lemma]{Theorem}
\newtheorem{cor}[lemma]{Corollary}
\newtheorem{prop}[lemma]{Proposition}

\theoremstyle{definition}
\newtheorem{definition}[lemma]{Definition}

\newtheorem{example}[lemma]{Example}
\theoremstyle{remark}

\DeclareMathOperator{\Conv}{Conv}
\DeclareMathOperator{\Eu}{Eu}
\DeclareMathOperator{\Vol}{Vol}
\DeclareMathOperator{\rowspan}{rowspan}
\DeclareMathOperator{\rank}{rank}

\newcommand{\mb}{\mathbf}

\newcommand{\Z}{\mathbb{Z}}
\newcommand{\Q}{\mathbb{Q}}
\newcommand{\R}{\mathbb{R}}
\newcommand{\C}{\mathbb{C}}
\newcommand{\N}{\mathbb{N}}

\begin{document}

\title{Interpolation of toric varieties}
\author[A.~Dickenstein, S. ~Di Rocco, R.~Piene]{Alicia Dickenstein, Sandra Di Rocco and Ragni Piene}
\address{Departamento de Matem\'atica, FCEN, Universidad de Buenos Aires, Ciudad
Universitaria - Pab. I, (1428) Buenos Aires, Argentina}
\email{\href{mailto:alidick@dm.uba.ar}{alidick@dm.uba.ar}}
\urladdr{\href{http://mate.dm.uba.ar/~alidick}
{http://mate.dm.uba.ar/~alidick}}
\address{Sandra Di Rocco \\ Department of Mathematics, KTH Royal Institute of Technology\\ SE 10044 Stockholm, Sweden}
\email{\href{mailto:dirocco@kth.se}{dirocco@kth.se}}
\urladdr{\href{https://www.kth.se/profile/dirocco}
{https://www.kth.se/profile/dirocco}}
\address{Ragni Piene\\Department of Mathematics\\
University of Oslo\\P.O.Box 1053 Blindern\\NO-0316 Oslo\\Norway}
\email{\href{mailto:ragnip@math.uio.no}{ragnip@math.uio.no}}
\urladdr{\href{https://www.mn.uio.no/math/english/people/aca/ragnip/index.html}
{https://www.mn.uio.no/math/english/people/aca/ragnip/index.html}}

\date{\today}

\begin{abstract}
Let $X\subset \mathbb P^d$ be an $m$-dimensional  variety in $d$-dimensional complex projective space. Let $k$ be a positive integer 
such that $\binom{m+k}k \le d$. Consider the following \emph{interpolation problem}: does there exist a variety 
$Y\subset \mathbb P^d$ of dimension $\le \binom{m+k}k -1$, with $X\subset Y$, such that the tangent space to $Y$ at a point $p\in X$
 is equal to the $k$th osculating space to $X$ at $p$, for almost all points $p\in X$?
In this paper we consider this question in the \emph{toric} setting. We prove that if $X$ is toric, then there is a unique toric variety $Y$ solving the above interpolation problem. We
identify $Y$ in the general case and we explicitly compute some of its invariants when $X$ is a toric curve.
\end{abstract}

\maketitle

\section{Introduction}

When a problem can be modeled by  polynomial equations, where their solutions correspond to an algebraic variety, it often becomes important to determine the optimal variety that satisfies given constraints. 
The classical \emph{interpolation problem} in algebraic geometry is the following:  find all plane curves of a prescribed degree passing through a given set of points.
More generally, one can consider a class of varieties together with a collection of incidence conditions, involving linear subspaces and possibly tangency or higher order osculating conditions, and ask for those varieties in the class that satisfy the given conditions.
This is both a theoretically and computationally challenging problem in algebraic geometry and related fields.

A classical example in real differential geometry is the following \cite{BL}*{p.~56}: Consider a space curve $C$. Find a space curve such that its osculating planes are equal to the normal planes of $C$. The solution to this question is the \emph{evolute} of the curve $C$, namely the locus of its spherical curvature centers. The study of osculating spaces to a space curve goes back at least to Monge and others in the 18th century. For the case of curves in higher dimensional spaces, see \cite{numchar}. 

The $k$th \emph{osculating space} to a variety $X\subset\mathbb P^d$   at a point  $p\in X$ is a linear space of dimension $\le \binom{m+k}k -1$ in $\mathbb P^d$ that is tangent to $X$ at $p$ to the order $k$. The osculating space at $p\in X$ of order $k=1$ is the embedded tangent space, denoted by $T_{X,p}.$ 
Osculating spaces have been extensively studied in the context of higher order dual varieties, see \cites{note,higher} for the general case and \cites{ASR,AR} for the case of toric varieties. 
We refer to \ref{ssec:osculating} for more details.

 The kind of interpolation problem we consider in this article is the following. Fix a variety $X\subset \mathbb P^d$ of dimension $m$  in complex projective space of dimension $d$. Let $k$ be a positive integer satisfying $\binom{m+k}k \le d$. Consider the set of all varieties $Y\subset \mathbb P^d$ of dimension  $\le \binom{m+k}k -1$ such that $X\subset Y$. We say that  $Y$ satisfies the $k$th \emph{interpolation condition} with respect to $X$ -- or that $Y$ is a \emph{$k$th interpolant of $X$} -- if the embedded tangent space to $Y$ at
  almost all points of $X$ is equal to the $k$th osculating space to $X$ at that point. 
A natural question 
is then:

 \emph{ Determine the existence and uniqueness of a $k$th interpolant, and, if it exists, explore methods for its construction.}
 
  In general, answers are expected to be challenging, particularly with regard to uniqueness, as we are seeking a unique object that satisfies specific local conditions. However, in the case of toric varieties, the rigidity imposed by the torus action on the geometry allows us to provide a complete solution to the problem. 
  
 If $X$ and $Y$ are toric varieties and $Y$ satisfies the $k$th interpolation condition with respect to $X$, we call $Y$ a $k$th \emph{toric interpolant} of $X$.

A toric variety  is a (not necessarily normal) 
 algebraic variety  containing an algebraic torus as a Zariski open set 
 and such that the multiplicative self-action of the torus extends to the whole variety. Projective spaces are toric varieties and the torus of $\mathbb P^d$ is the open subset $T^d$ of projective points with all nonzero coordinates. The action of  $T^d$ on $\mathbb P^d$ is given by coordinatewise multiplication, that is,  multiplication by a point $(t_0:\dots : t_d) \in T^d$ is given by the diagonal matrix with diagonal entries $(t_0, \dots, t_d)$. 
 Equivariantly embedded projective toric varieties $X \subset \mathbb P^d$ of dimension $m$ not contained in a coordinate hyperplane are rational varieties parameterized by monomials
 with exponents in the columns of a matrix $A \in \Z^{(m+1) \times (d+1)}$ of rank $m$, which give the weights of the torus action in~\eqref{eq:action} below~\cite{GKZ}*{Ch.~5, Prop.~1.5}.  In this case, we denote as usual $X=X_A$. This configuration of column vectors lies on a hyperplane off the origin and so  we will assume without loss of generality that the first coordinate of each of these column vectors is equal to $1$.  In fact $X_A$ is associated to the affine equivalence class of $A$ by~\cite{GKZ}*{Ch.~5, Prop. 1.2}. We refer the reader to Section~\ref{toricinterp}, where we recall with more detail this characterization and we show in Theorem~\ref{thm:unique} that the variety $X_A$ is completely determined by its embedded tangent space $T_{X_A, (1 : \dots :1)}$.
 
Given $A \in \Z^{(m+1) \times{(d+1)}}$ and a positive integer $k$, consider the matrix  $A^{(k)}$ in $\Z^{\binom{m+k}{k} \times (d+1)}$ 
given in Definition~\ref{def:Ak}, originally introduced in~\cite{ASR} in connection with the  study of higher order dual varieities of toric varieties. In Theorem~\ref{th:interpolant} we prove existence and unicity of toric interpolants, while providing an explicit construction:
\[Y=X_{A^{(k)}}\subset \mathbb P^d \text{ \emph{is the unique $k$th toric interpolant  of} } X_A.\]

We explain that, thanks to the torus action, 
this happens as soon as the tangent space to $Y$  at one point $p$ with all nonzero coordinates equals the $k$th osculating space to $X_A$ at $p$.

In Section \ref{sec:curves} we analyze in more details the case of toric curves.
Let us first recall the concept of cyclic polytopes associated to curves.  The $m$-moment curve in $\mathbb {R} ^{m+1}$ is defined by the image of the map $\alpha_m: \mathbb R \to \mathbb R^{m+1}$ sending $t$ to the vector $(1,t, t^2,\dots, t^m)$. An $m$-dimensional \emph{cyclic polytope} is defined as the convex hull of the image, under the moment curve, of a finite number (at least two), of ordered distinct points. It is known that all the the images of these points by $\alpha_m$ are vertices of the cyclic polytope, which has dimension $m$. Its combinatorial structure is independent of the points chosen. 
Toric curves are associated to matrices $A \in \Z^{2 \times (d+1)}$ of the form
\[\begin{pmatrix}
1  & 1 & \dots & 1 & 1\\
\ell_0 & \ell_1 &\dots & \ell_{d-1} & \ell_d
\end{pmatrix},\]
where we will always assume, without loss of generality, that $\ell_0 < \ell_1 < \dots < \ell_d$. 
We describe the degree and number of lattice points of the second interpolants of toric curves.
  All matrices $A^{(k)}$ arising from toric curves are {\em positroids} (i.e., all their maximal minors are non-negative) and thus their
convex hulls 
define {\em positive geometries} in the sense of ~\cite{PosGeo}. The columns of the matrices $A^{(k)}$ correspond 
to the vertices of a cyclic lattice polytope of dimension $k$.  In the case $k=2$
we consider the associated polygon  and we compute its canonical form. This is an example of a generalized \emph{tree amplituhedron}  of type $\mathcal A_{d+1,1,2}(Z)$ \cites{PosGeo,ampli} in the 
positive Grassmann variety $\mathbb G^{\ge 0}_{1,3}=(\mathbb P^2)^{\ge 0}$.  Indeed,  amplituhedra are geometric objects studied  in particle physics in relation to integral representations of scattering amplitudes and they are interesting mathematical objects that naturally generalize cyclic polytopes. 
Our computations can be extended to $k$th interpolants for any $k$, based on the description of the facets of combinatorial cyclic polytopes~\cite{CD}.
In the standard case of the toric curves with $\ell_i=i$ for any $i=0, \dots, d$,  we compute the degree of the second and the $(d-1)$th interpolant.

In Section~\ref{sec:dual} we describe the normalization of the second interpolant of a toric curve. By computing the degree of its dual variety and the degree of the dual variety of its normalization, we conclude that though the second interpolant is a linear projection of its normalization, it is not a \emph{general} linear projection.

\section*{Acknowledgements} This work was partially supported by the project Pure Mathematics in Norway, funded by Trond Mohn Foundation and 
Tromsø Research Foundation. AD was partially supported by UBACYT 20020220200166BA and CONICET PIP 20110100580, Argentina.  
We would like to thank the referees for their vaulable comments and questions that have helped us improve the exposition.

\section{Toric interpolation}\label{toricinterp}
In this section we recall some known facts about projective toric varieties $X_A$ associated to a matrix $A$ (\ref{ssec:toricgeo}) and the notion of higher osculating spaces of a
projective variety (\ref{ssec:osculating}). For toric varieties  we prove that hyperosculation of order $k$ can be characterized by an associated matrix $A^{(k)}$ naturally built from $A.$
 In Definition \ref{def:interpolant} we introduce the notion of $k$th (toric) interpolant and in Theorem~\ref{th:interpolant} we prove that the $k$th toric interpolant of the equivariantly embedded projective toric variety $X_A$ is unique and is equal to the toric variety $X_{A^{(k)}}$. Uniqueness is based on the basic result in Theorem~\ref{thm:unique}.

\subsection{Projective toric varieties}\label{ssec:toricgeo}

Let $A=\{(1, {\mathbf a_0}),\ldots,(1, {\mathbf a_d})\}\subset\mathbb Z^{m+1}$ be a  finite set of lattice points in an affine hyperplane off the origin. We denote also by $A$ the
$(m+1)\times(d+1)$-matrix whose columns are given by the $(1, {\mathbf a_j})$'s. Consider the map  
\begin{equation}\label{eq:iota}
\iota_A:(\mathbb C^*)^{m}  \to  \mathbb P^d \quad \text{ defined by } \quad
\mathbf t:=(t_1,\ldots, t_m) \mapsto (\mathbf t^{\mathbf a_0}:\cdots : \mathbf t^{\mathbf a_d}),
\end{equation}
 where
$\mathbf t^{\mathbf a_j}=\prod_i t_i^{a_{i,j}}$. 
Note that if we consider instead the map
\begin{equation}\label{eq:jota}
\iota'_A:(\mathbb C^*)^{m+1}  \to  \mathbb P^d \quad \text{ defined by } \quad
\mathbf (t_0,\mathbf t) \mapsto (t_0 \mathbf t^{\mathbf a_0}:\cdots : t_0 \mathbf t^{\mathbf a_d}),
\end{equation}
we have that $\iota_A(\mathbf t)= 
\iota'_A (t_0, \mathbf t)$.

The projectively embedded toric variety $X_A \subset \mathbb P^d$ associated with $A$ is defined to be the Zariski closure  of the image of $\iota_{A}$. This image
is the torus $X_A \cap \{ x \in \mathbb P^d \, : \, x_i \neq 0, \, i =0, \dots, d\}$ of $X_A$ and it always contains the point \[{\mathbf 1}:=(1 \, : \, \ldots \, : \, 1) \in 
 \mathbb P^d.\]
 
Also, $X_A=\overline{{\rm Orb}({\mb 1})}$ is the closure of the orbit of the
point ${\mb 1}$ by the diagonal action
\begin{equation}\label{eq:action}
{\mb t} {*}_A (x_0: \dots : x_d) \, = \, ({\mb t}^{\mb a_0} x_0: \dots : {\mb t}^{\mb{a_d}} x_d).
\end{equation}

\smallskip

The variety $X_A$  is an affine invariant of the configuration $A$ and its dimension  equals the affine dimension of 
$A$~\cite{GKZ}*{Prop.~1.2, Ch.~5}. As we mentioned in the introduction, we will assume without loss of generality that all points in $A$ have first coordinate equal to $1$, which implies that $\dim(A)=\rank(A)-1$.
We will moreover assume without loss of generality that the matrix $A$ has maximal rank $m+1$, or equivalently, that the convex hull of the points ${\mathbf a_0}, \dots, {\mathbf a_d}$ is of maximal dimension $m$. As we also mentioned, Proposition~1.5 in Chapter~5 of the book~\cite{GKZ} by Gelfand, Kapranov and Zelevinsky shows that any projective toric variety with an equivariant embedding (that is, with a diagonal torus action) and not contained in a coordinate hyperplane, is of the form $X_A$.  The degree of the projective variety $X_A$ equals $\Vol(A),$ the normalized volume of $A$ (cf. Theorem~4.16 in~\cite{BS}). Subtracting multiples of the first row from the other rows, we can also assume without loss of generality that ${\mb a_0} = {\mb 0}$.  When the subgroup $\Z A$ generated by ${\bf a_0}=0, \dots, {\bf a_d}$ equals $\Z^m$,
the normalized volume  $\Vol(A)$ of $A$ is defined as $m!$ times the Euclidean volume of the convex hull of the lattice configuration $A$. 
Otherwise, it equals this quantity divided by the rank of the quotient $\Z^m/ \Z A$.
\smallskip

We present a very simple example:

\begin{example} \label{ex:simple}
Consider the matrices
\[
A_1 = \begin{pmatrix}
1 & 1 & 1 & 1 \\
0 & 1 & 2 & 3
\end{pmatrix},
\quad
A_2 = \begin{pmatrix}
3 & 2 & 1 & 0 \\
0 & 1 & 2 & 3
\end{pmatrix},
\quad
A_3 = 
\begin{pmatrix}
1 & 1 & 1 & 1 \\
0 & 1 & 2 & 3 \\
1 & 2 & 3 & 4
\end{pmatrix},
\]
and $A_4 = \begin{pmatrix}
0 & 1 & 2 & 3.
\end{pmatrix}$. 
The closure of the images by the corresponding maps  $\iota_{A_j}, \, j=1,3,4$,   in~\eqref{eq:iota}, equals the same {\em projective} toric variety: the rational normal curve of degree $3$ in $\mathbb P^3$ cut out by the following equations:
\[ 
\{ (x_0: x_1: x_2 :x_3) \in \mathbb P^3\, | \, x_1^2 - x_0 x_2 =0, x_2^2 -x_1 x_3=0, x_0 x_3 - x_1 x_2 =0 \}.\]
See also Example~\ref{ex:q=2} below.

Note that the columns of $A_1$ are the injective image of the columns of $A_4$ by the affine map $m \to (1,m)$. The matrix $A_3$ has rank $2$ and its columns are the injective image of the columns of $A$ by the linear map $(m_1, m_2) \mapsto (m_1, m_2, m_1+m_2)$.

The columns of the matrix $A_2$ equal the image of the columns of $A_1$ via the linear map $(m_1, m_2) \mapsto (3 m_1 - m_2, m_2)$ and $A_2 = M A_1$, where $M \in GL(2, \mathbb Q)$ is the matrix
\[
M = \begin{pmatrix}
    3 & -1 \\
    0 &  1
\end{pmatrix}.
\]
Indeed, the first row of $A_2$ is not the all $1$ vector, but  $(1, \ldots, 1)$ is in the rowspan of the matrix $A_2$. This is clear since $M^{-1} A_2 = A_1$,   It is easy to check, as we remarked in~\eqref{eq:jota}, that the map $j_{A_2}$ defined by $j_{A_2}(t_0, t_1) =(t_0^3: t_0^2 t_1: t_0 t_1^2: t_1^3)$ verifies
$j_{A_2}(t_0, t_1) = (1: s: s^2: s^3) = \iota_{A_1}(s)$ for $s = t_1/t_0$. Thus $A_2$ also gives a rational parametrization of the rational normal curve of degree $3$.

The vectors $(1,-2,1,0), (0, 1,-2,1), (1,-1,-1,1)$ that we can read from the
exponents of the equations generate the kernel of the matrices $A_1, A_2, A_3$, and the space of affine relations among the columns of $A_1$ (that is, elements of the kernel that add up to $0$). Note that it is not enough to select a basis of the kernel; for instance if we omit the equation $x_0 x_3 - x_1 x_2 =0$, the points in the line $\{x_1=x_2=0\}$ also satisfy the first two equations.  Instead, any two of the equations describe the variety outside the coordinate hyperplanes.
\end{example}

\subsection{Osculating spaces} \label{ssec:osculating}
Let $X\subset \mathbb P^d$ be a projective algebraic
variety of dimension $m$. Consider the sheaf  ${\mathcal L}:={\mathcal
O}_{\mathbb P^d}(1)|_X$, and let ${\mathcal P}_{X}^k({\mathcal L})$ denote the sheaf of $k$th order principal parts of 
$\mathcal L$ \cite{numchar}*{\S~6, p.~492}. Recall that the rank of  ${\mathcal P}_{X}^k({\mathcal L})$ at a smooth point $p \in X$ is $\binom{m+k}{k}$.
Indeed, the fiber ${\mathcal P}_{X}^k({\mathcal L})_x$ at a point
$p\in X$ is isomorphic to the vector space $\mathcal O_{X,x}/\mathfrak m^{k+1}_{X,p}$, where $\mathfrak m_{X,p}\subset \mathcal O_{X,p}$ is the maximal ideal in the local ring of $X$ at $p$. Principal parts bundles play a crucial role in the study of projective duality and differential properties of projective embeddings, see \cite{K}*{ IV.A., pp.~341--346; IV.D., pp.~359--365}.

Assume that $U\subset X$ has a parameterization
\[\mathbf t:=(t_1,t_2,\dots,t_m)\mapsto (x_0(\mathbf t):\cdots:x_d(\mathbf t)).\]
Then the $k$th jet map
\[j_k: 
{\mathcal O}_X^{d+1}\to  {\mathcal P}_{X}^k({\mathcal L})\]
restricted to $U$ is given by the matrix
\begin{equation}\label{Ak}
A^{(k)}(\mathbf t):=\left(
\begin{array}{cccc} 
x_0(\mathbf t)&x_1(\mathbf t)&\cdots &x_d(\mathbf t)\\
\frac{\partial x_0(\mathbf t)}{\partial t_1}&\frac{\partial x_1(\mathbf t)}{\partial t_1}&\cdots&\frac{\partial x_d(\mathbf t)}{\partial t_1}\\
\vdots & \vdots & \vdots & \vdots\\
\frac{\partial x_0(\mathbf t)}{\partial t_m}&\frac{\partial x_1(\mathbf t)}{\partial t_m}&\cdots&\frac{\partial x_d(\mathbf t)}{\partial t_m}\\
\frac{\partial^2 x_0(\mathbf t)}{\partial t_1^2}&\frac{\partial^2 x_1(\mathbf t)}{\partial t_1^2}&\cdots&\frac{\partial^2 x_d(\mathbf t)}{\partial t_1^2}\\
\frac{\partial^2 x_0(\mathbf t)}{\partial t_1 \partial t_2}&\frac{\partial^2 x_1(\mathbf t)}{\partial t_1 \partial t_2}&\cdots&\frac{\partial^2 x_d(\mathbf t)}{\partial t_1 \partial t_2}\\
\vdots & \vdots & \vdots & \vdots\\
\frac{\partial^k x_0(\mathbf t)}{\partial t_m^k}&\frac{\partial^k x_1(\mathbf t)}{\partial t_m^k}&\cdots&\frac{\partial^k x_d(\mathbf t)}{\partial t_m^k}\
\end{array}\right),
\end{equation}
where $\frac{\partial^j}{\partial t_i^j}$ denotes the Hasse derivative, that is, $\frac{1}{j!}$ times the standard derivative. 

Given a matrix $A$, we will denote by $\rowspan(A)$ the subspace of $\mathbb C^{d+1}$ spanned by the row vectors of $A$.
The \emph{$k$th osculating space} to $X$ at a point corresponding to ${\mb t}$ is  the linear space 
\[\mathbb P (\rowspan(A^{(k)}(\mb t))).\]

\subsection{The matrices $A^{(k)}$}\label{ssec:Ak}

We now construct matrices $A^{(k)}$ 
describing the higher osculating spaces of a toric variety $X_A$ at ${\mb 1} = (1: \dots: 1)$. 
For any $k$, the $k$th osculating spaces at the points $\iota_A({\mb t}) = {\mb t} {*}_A {\mb 1}$ in 
the torus of $X_A$ are translated by this action (defined in~\eqref{eq:action}).

We give two different constructions of matrices. They will define the same projective toric variety.

\begin{definition} \label{def:Ak}
Let $A \in \Z^{(m+1) \times (d+1)}$ with column vectors $(1, {\mb a_j}), j = 0, \dots d$, and $k \in \N$.
We define the associated matrix $\widetilde A^{(k)}$ as follows.
We will add $\binom{m+k}k -(m+1)$ new rows to $A$ in its lower part. We order the vectors $\{\mathbf i =(i_1,\dots,i_m) \in \Z^m_{\ge 0}\, | \, 2\le |\mathbf i|\le k\}$, with lexicographic order
with $0 < 1 < \dots < m$ and use them to label these rows.
 The entry in the matrix $\widetilde A^{(k)}$ in the $j$th column and the $\mathbf i$th row is the integer $a_{1,j}^{i_1}\cdots a_{m,j}^{i_m}$.
 When all the coefficients of $A$ are nonnegative,
we define another associated matrix $A^{(k)}$ as follows. 
Again, we add $\binom{m+k}k -(m+1)$ new (ordered) rows to $A$ in its lower part labeled by the vectors $\mathbf i =(i_1,\dots,i_m)$.
 The entry in the matrix $A^{(k)}$ in the $j$th column and the $\mathbf i$th row is the integer $\binom{\mathbf a_j}{\mathbf i}:=\binom{a_{1,j}}{i_1}\cdots \binom{a_{m,j}}{i_m}$.
\end{definition}

Since $\binom{a}{i} = \frac{a (a-1) \dots (a-i+1)}{i!}$ for any positive integer $a \ge i$, it is straightforward to see that  for any $k$ there exists an integer matrix $M_k$ such that $\widetilde A^{(k)} = M_k \cdot A^{(k)}$ with  $\det(M_k)= \prod_{2 \le |{\bf i}| \le k} i_1!\dots i_m!$. 
By \cite{ASR}*{2.2}, the projective linear space 
\[\mathbb P (\rowspan (A^{(k)})) =\mathbb P (\rowspan (\widetilde A^{(k)}))\]
equals the
$k$th osculating space of $X_A$ at the point ${\mathbf 1}$. This is easily seen by considering the parameterization $\iota_A$ to construct the matrix $A^{(k)}({\bf t})$  in (\ref{Ak}).  Evaluating at ${\bf t}={\bf 1}$,  we get the matrix $A^{(k)}$.

\begin{example} \label{ex:q=2}
 Let $d \in \mathbb N$ and let $A$ be the matrix
 \[
A_d=\left(
\begin{array}{cccccc} 1&1&1&1& \cdots &1\\
0&1&2&3& \cdots & d
\end{array}\right).
 \] 
When $k=2$,  we have
\[
\begin{array}{cc}
A_d^{(2)}=\left(
\begin{array}{cccccc} 1&1&1&1& \cdots &1\\
0&1&2&3& \cdots & d\\
0&0& 1&3& \cdots &\frac{d(d-1)}{2}
\end{array}\right), \, & \, 
\widetilde A_d^{(2)}=\left(
\begin{array}{cccccc} 1&1&1&1& \cdots &1\\
0&1&2&3& \cdots & d\\
0& 1& 4&9& \cdots & d^2
\end{array}\right),
\end{array}\]
and $\widetilde A_d^{(2)} \, = \, M_2  \cdot A_d^{(2)}$, 
where $M_2 \in {\rm {GL}(3,\Q})$
is the matrix
\[
M_2=\left(
\begin{array}{ccc} 
1&0&0\\
0&1&0\\
0&1& 2
\end{array}\right).
\]
 Consider the rational parameterizations of $X_{A_d^{(2)}} = X_{\widetilde A_d^{(2)}}$   from $ (\C^*)^2$ to $\mathbb P^d:$ 
\[ \iota_{A_d^{(2)}}:t \mapsto (1 : t_1 : t_1^2 t_2:\dots : t_1^d t_2^{\frac{d (d-1)}{2}}), \quad \iota_{\widetilde A_d^{(2)}}: (s_1,s_2) \mapsto (1 : s_1 s_2: s_1^2 s_2^4: \dots: s_1^d s_2^{d^2}),\] 
and let $\varphi_{M_2}: (\C^*)^2 \to (\C^*)^2$ be the  $2:1$-map $s \mapsto (s_1 s_2, s_2^2)$. Then,
it is straightforward to check that $\iota'_{\widetilde A_d^{(2)}} = \iota_{A_d^{(2)}} \circ \varphi_{M_2}$.
We could also consider the first rows of the matrices and get a parameterization of the same (projective) variety from $(\C^*)^3$ to $\mathbb P^d$, given by the map $\iota'_A(t_0, t_1, t_2) = (t_0: t_0 t_1 : \dots : t_0 t_1^d t_2^{\frac{d (d-1)}{2}})$. 

As noted above, the tangent space at $\mb 1$ of the variety $X_{A_d^{(2)}} = X_{\widetilde A_d^{(2)}}$
 equals the second osculating space of $X_A$ at $\mb 1$.
The standard cyclic polygon of $d+1$ points in the moment curve in the plane is in general presented with vertices in the columns of the
matrix $\widetilde A_d^{(2)}$, but  $\iota'_{A_d^{(2)}}$ is $1:1$ while $\iota'_{\widetilde A_d^{(2)}}$ is $2:1$. 
\end{example}

\subsection{The toric interpolant}\label{ssec:interpolant}
We define $k$th toric interpolants for any $k \in \mathbb N$ and we  show in Theorem~\ref{th:interpolant} that a toric interpolant always exists and that it is unique. 

\begin{definition}\label{def:interpolant} Let $X_A\subset \mathbb P^d$ be a projective toric variety and  $k \ge 1$ a natural number. 
\begin{enumerate}
    \item We say that a projective toric variety  
$X_B$ is a $k$th toric interpolant  of $X_A$ at $p\in X_A$ if $X_A \subset X_B$ and the tangent space to $X_B$ at the point ${\bf p} \in X_A$
 is equal to the $k$th osculating space to $X_A$ at $\bf p$. 
 \item We say that $X_B$ is a $k$th toric interpolant  of $X_A$ if $X_B$ is a $k$th toric interpolant  of $X_A$ at almost all points ${\bf p}\in X_A$. 
\end{enumerate}
 \end{definition}

We start by showing the basic result that a projective toric embedding  $X_A\hookrightarrow \mathbb P^d$ is completely determined by the tangent space at the point $\mathbf 1  \in X_A\subset\mathbb P^d$.

\begin{theorem}
    \label{thm:unique}
    Let $A\in \Z^{(m+1) \times (d+1)}$ be such that $\rowspan(A)\subset\R^{d+1}$ is a subspace
    of dimension $m+1$ containing the vector $(1,\dots, 1) \in \R^{d+1}$.
    Then, the following statements hold:
    \begin{itemize}
    \item[(i)] Given another matrix $A'\in \Z^{(m+1) \times (d+1)}$  such that $\rowspan(A')$ has dimension $m+1$ and contains the vector $(1,\dots, 1) \in  \R^{d+1}$, then
        $X_A = X_{A'}$ if and only if
    $\rowspan(A)$ = $\rowspan(A')$.
    
    \smallskip
    
    \item[(ii)] The embedded tangent space to $X_A$ at $\mathbf 1 \in X_A\subset \mathbb P^d $ is the projectivization   $\mathbb P({\rm row span}(A) \otimes_{\mathbb R} \mathbb C)$.
    \end{itemize}

    \smallskip

Moreover, let $L$ be a linear subspace in $\R^{d+1}$ of dimension $m+1$  which is defined over $\mathbb Q$  such that
$(1, \dots,1) \in L $.
Consider the projectivization $\mathbb P(L_{\mathbb C})$ of its extension $ L_{\mathbb C} = L\otimes_{\mathbb R} \mathbb C$. There exists a unique equivariantly embedded toric variety $X_A$ such that the image $\mathbb P(L_{\mathbb C})$ in projective space of $L_{\mathbb C}$ is the embedded tangent space $T_{X_A, \mb 1}$ of $X_A$ at $\mathbf 1$. Indeed, it is enough to take any matrix $A \in \Z^{(m+1) \times (d+1)}$ such that $L = {\rm row span}(A)$.
  \end{theorem}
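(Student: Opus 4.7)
The plan is to prove (ii) first by a direct Jacobian computation, deduce (i) from it, and then obtain the moreover statement as an immediate consequence.

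For (ii), I would use the parameterization $\iota'_A$ from~\eqref{eq:jota}, which passes through $\mathbf 1\in X_A\subset\mathbb P^d$ when $(t_0,\mathbf t)=(1,\mathbf 1)$, and lift this point to the affine cone with representative $(1,\dots,1)\in\C^{d+1}$. The partial derivative with respect to $t_0$ evaluated at $(1,\mathbf 1)$ has $j$-th entry $\mathbf 1^{\mathbf a_j}=1$, reproducing the first row of $A$. For $i=1,\dots,m$, the partial derivative with respect to $t_i$ evaluated at $(1,\mathbf 1)$ has $j$-th entry $a_{i,j}\cdot\mathbf 1^{\mathbf a_j-e_i}=a_{i,j}$, which is the $(i+1)$-th row of $A$. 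Since $A$ has rank $m+1$ by the standing assumption in \ref{ssec:toricgeo} and $\mathbf 1$ lies in the dense torus orbit of $X_A$, it is a smooth point, and the affine tangent cone equals $\rowspan(A)\otimes_\R\C$, projectivizing to give (ii).

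For (i), the ``only if'' direction is immediate from (ii): if $X_A=X_{A'}$ then the embedded tangent spaces at $\mathbf 1$ coincide, forcing $\rowspan(A)=\rowspan(A')$. For the ``if'' direction, I would argue via the toric ideal: the homogeneous ideal of $X_A$ is generated by binomials $\mathbf x^u-\mathbf x^v$ with $u,v\in\Z^{d+1}_{\ge 0}$ and $u-v\in\Ker_\Z(A)$, and $\Ker_\Z(A)=\Ker_\Q(A)\cap\Z^{d+1}$ is the $\Q$-orthogonal complement of $\rowspan(A)$ intersected with the integer lattice. Hence equal rowspans yield equal $\Z$-kernels, equal toric ideals, and therefore $X_A=X_{A'}$.

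For the moreover statement, given $L\subset\R^{d+1}$ as in the hypotheses, I would pick any $\Q$-basis of $L\cap\Q^{d+1}$, clear denominators to obtain an integer matrix $A\in\Z^{(m+1)\times(d+1)}$ with $\rowspan(A)=L$; the hypothesis $(1,\dots,1)\in L$ ensures this vector lies in $\rowspan(A)$, so by (ii) one has $T_{X_A,\mathbf 1}=\mathbb P(L_\C)$, and uniqueness is forced by (i) since any competing matrix would have the same rowspan and hence define the same variety. The only delicate point I anticipate is confirming that the Jacobian image in (ii) really gives the full embedded tangent space rather than a proper subspace, which reduces to smoothness of $\mathbf 1$; this is automatic because $\mathbf 1$ belongs to the dense torus orbit of $X_A$.
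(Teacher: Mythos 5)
Your proposal is correct and, for the substantive parts, follows essentially the same route as the paper. Part (ii) and the ``only if'' direction of (i) are handled exactly as in the paper: the paper evaluates the matrix $A^{(1)}(\mathbf t)$ from the jet map~\eqref{Ak} at $\mathbf t=\mathbf 1$ to recover $A$ itself, whereas you compute the Jacobian of $\iota'_A$ at $(1,\mathbf 1)$; the two computations differ only in bookkeeping (the coordinate row versus the $t_0$-derivative), and the explicit remark about smoothness of $\mathbf 1$ is correct and implicit in the paper's argument. The ``moreover'' statement is also obtained the same way, by combining (i) and (ii).

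The one place where you take a slightly different route is the ``if'' direction of (i). The paper disposes of it in one sentence by observing that equal rowspans make the column configurations affinely equivalent and then citing \cite{GKZ}*{Ch.~5, Prop.~1.2}. You instead unpack this via the toric ideal: equal real rowspans force equal $\Q$-kernels, hence equal $\Z$-kernels $\Ker_\Z(A)=\Ker_\Z(A')$, hence equal binomial ideals $I_A=I_{A'}$, and (since $(1,\dots,1)\in\rowspan(A)$ makes $I_A$ homogeneous and prime, with $V(I_A)$ equal to the Zariski closure of the torus image) equal projective varieties. This is a correct and more self-contained justification of what the paper asserts by reference; it buys explicitness at the cost of invoking the standard description of the toric ideal. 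The two arguments are really two formulations of the same underlying fact --- that $X_A$ depends only on the affine dependence relations among the columns, i.e.\ on $\Ker(A)$, i.e.\ on $\rowspan(A)$ --- so this is a difference in exposition rather than in mathematical content.
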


\begin{proof}
If $\rowspan(A)$ = $\rowspan(A')$ we have that $X_A=X_{A'}$ because the configurations of columns of $A$ and $A'$ are affinely equivalent.

Let $\iota_A$ be the rational parameterization of $X_A$ defined in~\eqref{eq:iota} and 
$U= \iota_A((\mathbb C^*)^m)$ the torus of $X_A$. Taking $k=1$ in~\eqref{Ak} we get that  $A^{(1)}(1, \ldots, 1) = A$.
Therefore, we see that the embedded tangent space at the point $\mathbf 1$ is the projective linear space spanned by the row vectors of $A$ considered as points in $\mathbb P^d$, which shows item (ii).
Then, if $X_A = X_{A'}$, they have the same embedded tangent space at $\mb 1$ and so $\rowspan (A) = \rowspan(A') $

Given such a linear subspace $L$ as in the statement and any choice of matrix $A$ with $L = \rowspan(A)$, 
then $T_{X_A, \mb 1} =\mathbb P (\rowspan(A) \otimes_{\mathbb R} \mathbb C) =  \mathbb P(L_{\mathbb C})$.  \end{proof}

We next show the existence and uniqueness of toric interpolants for any $k \in \mathbb N$.
 
 \begin{thm}\label{th:interpolant}
 Let $A \in \Z^{(m+1) \times (d+1)}$ and $k \in \N$.
 \begin{itemize}
 \item[(i)] The variety $X_{A^{(k)}}$ is a $k$th toric interpolant of $X_A$ at {\em all points in the torus}  of $X_A$, i.e., all points of $X_A$ in the torus of $\mathbb P^d$.
\item[(ii)]  Assume  there exists a matrix $B \in \Z^{(m_B+1) \times (d+1)}$ such that $X_B$ is a 
 $k$th toric interpolant of $X_A$ at one point ${\bf p}^* = \iota_A({{\bf t}^*})$ in the torus of $X_A$. Then $X_B = X_{A^{(k)}}$.
 
 \end{itemize}

 \end{thm}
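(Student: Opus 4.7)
The plan is to reduce both parts to Theorem~\ref{thm:unique} by exploiting the diagonal torus action on $\mathbb{P}^d$ to transport every tangent/osculating equality back to the point $\mathbf{1}$.

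For part (i), I would argue in three steps. First, establish the inclusion $X_A \subset X_{A^{(k)}}$: the top $m+1$ rows of $A^{(k)}$ are exactly the rows of $A$, so setting the extra $\binom{m+k}{k}-(m+1)$ parameters of $\iota'_{A^{(k)}}$ to $1$ recovers $\iota'_A$; taking Zariski closures gives the inclusion. Second, the equality at $\mathbf{1}$ is immediate: by Theorem~\ref{thm:unique}(ii),
\[
T_{X_{A^{(k)}}, \mathbf{1}} \;=\; \mathbb{P}\bigl(\rowspan(A^{(k)}) \otimes_{\mathbb{R}} \mathbb{C}\bigr),
\]
and by \cite{ASR}*{2.2} (recalled in Section~\ref{ssec:Ak}) this is precisely the $k$th osculating space to $X_A$ at $\mathbf{1}$. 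Third, propagate by torus translation: any point $\mathbf{p} = \iota_A(\mathbf{t}) \in X_A \cap T^d$ belongs also to $X_{A^{(k)}} \cap T^d$, so coordinatewise multiplication by $\mathbf{p}$ is a projective automorphism that sends $\mathbf{1}$ to $\mathbf{p}$ and preserves both $X_A$ and $X_{A^{(k)}}$. Since automorphisms of $\mathbb{P}^d$ preserving a variety preserve its tangent and higher osculating spaces, the equality at $\mathbf{1}$ transports to equality at $\mathbf{p}$.

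For part (ii), I would run the same torus-translation in reverse. The point $\mathbf{p}^* \in T^d$ lies in $X_A \subset X_B$, hence in the torus of $X_B$. By the interpolation hypothesis combined with part (i),
\[
T_{X_B, \mathbf{p}^*} \;=\; \bigl(k\text{th osculating space of } X_A \text{ at } \mathbf{p}^*\bigr) \;=\; T_{X_{A^{(k)}}, \mathbf{p}^*}.
\]
Multiplication by $(\mathbf{p}^*)^{-1}$ is a projective automorphism preserving both $X_B$ and $X_{A^{(k)}}$ (again because $\mathbf{p}^*$ lies in both tori) and sends $\mathbf{p}^*$ to $\mathbf{1}$; applying it yields $T_{X_B, \mathbf{1}} = T_{X_{A^{(k)}}, \mathbf{1}}$. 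Theorem~\ref{thm:unique}(ii) then forces $\rowspan(B) = \rowspan(A^{(k)})$ (equality of the $\mathbb{C}$-projectivizations descends to the rational row spans since $B$ and $A^{(k)}$ are integer matrices), and Theorem~\ref{thm:unique}(i) concludes $X_B = X_{A^{(k)}}$.

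The main verification that demands care is the claim that diagonal multiplication by a point in the torus of $X_A$ preserves the larger variety $X_{A^{(k)}}$. This reduces to the observation that the inclusion $X_A \subset X_{A^{(k)}}$, combined with equivariance of both embeddings, gives an inclusion $X_A \cap T^d \subset X_{A^{(k)}} \cap T^d$ of subgroups of $T^d$, so any element of the left-hand torus acts on $\mathbb{P}^d$ as an automorphism preserving the right-hand variety. Once this is pinned down, the rest of the argument is a straightforward torus translation.
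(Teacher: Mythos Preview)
Your proposal is correct and follows essentially the same approach as the paper: reduce everything to the point $\mathbf{1}$ via the diagonal torus action and then invoke Theorem~\ref{thm:unique}. Your treatment of part~(ii) is in fact more explicit than the paper's, which jumps directly to $\mathbf{1}$ without spelling out the transport from $\mathbf{p}^*$; your final paragraph correctly identifies and justifies the one nontrivial point, namely that $\mathbf{p}^*$ (resp.\ $\mathbf{p}$) lies in the torus of $X_B$ (resp.\ $X_{A^{(k)}}$) because $X_A\cap T^d$ sits inside $X_B\cap T^d$ and $X_{A^{(k)}}\cap T^d$ as subtori of $T^d$.
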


\begin{proof}
Assume that the first row of $A$ is given by the vector with all coordinates equal to $1$. We have already observed that  for any ${\bf t} \in (\C^*)^m$, the tangent space to $X_A$ at
${\bf p}= \iota_A  ({\bf t}) = (\mathbf t^{\mathbf a_0}:\cdots : \mathbf t^{\mathbf a_d})$ is spanned by the row vectors of the matrix $A^{(1)}(\mathbf t)$ in (\ref{Ak}). 
Note that multiplying the $j$-th row vector of this matrix by $t_j$ for $j=1, \dots, m$, we get that the rows of the new matrix
are the torus translates by the diagonal
action ${*}_A$ in~\eqref{eq:action} of the rows of the matrix $A^{(1)} (\mathbf 1) = A$.
 We can write this as ${\mb t} {*}_{A} A^{(1)} (\mb 1) = A^{(1)}(\mb t)$.
 Similarly, the row vectors of $A^{(k)}(\mathbf t)$ span the $k$th osculating space to $X_{A}$ at the point $\bf p$. Multiplying its rows by corresponding powers of $t_1, \dots, t_m$, we see that
the rowspan of  $A^{(k)}(\mathbf t)$  equals the rowspan of the matrix  ${\mb t} {*}_A {A^{(k)}} (\mb 1) = {\mb t} {*}_A A^{(k)}$.

 Introduce $\binom{m+k}k -(m+1)$ new (ordered) variables $u_{\mathbf i}$, where $\mathbf i =(i_1,\dots,i_m)$ and $2\le |\mathbf i|\le k$.
 The entry in the matrix $A^{(k)}$ in the $j$th column and the $\mathbf i$th row is the integer $\binom{\mathbf a_j}{\mathbf i}:=\binom{a_{1,j}}{i_1}\cdots \binom{a_{m,j}}{i_m}$.
Hence the toric variety $X_{A^{(k)}}$ has a parameterization
 \[\iota_{A^{(k)}}\colon
 (\mathbf t, \mathbf u):=(t_1,\dots,t_m, \dots,u_{\mathbf i},\dots) \mapsto (\cdots : \mathbf t^{\mathbf a_j}\prod_{\mathbf i}u_{\mathbf k}^{\binom{\mathbf a_j}{\mathbf i}} : \cdots).\]
  It follows that the row span of the matrix $A^{(k)}(\mathbf t,\mathbf u)$ is equal to the tangent space to $X_{A^{(k)}}$ 
  at the point $(\cdots : \mathbf t^{\mathbf a_j}\prod_{\mathbf i}u_{\mathbf i}^{\binom{\mathbf a_j}{\mathbf i}} : \cdots)$. This is a point of $X_A$ if all  $u_{\mathbf i}=1$.
   In particular, we have $X_A\subseteq X_{A^{(k)}}$. The tangent space to $X_{A^{(k)}}$ at the point ${\bf p} \in X_A$ is equal to 
     the torus translate by $({\bf t},{\bf 1})$ by means of the action ${*}_{A^{(k)}}$ of the embedded tangent space  at the point $(1: \dots: 1) \in  \mathbb P^d$, 
     which equals the $k$th osculating space to $X_A$ at $\bf p$.
    It follows that $X_{A^{(k)}}$ is a $k$th toric interpolant of $X_A$ at any point $\bf p$ in the torus of $X_A$.
  
   Assume a configuration $B$ defines a $k$th-toric interpolant of $X_A$. Then, we have the equality of tangent spaces $T_{B, \mb 1} = T_{A^{(k)}, \mb 1}$ since both coincide with the $k$th-osculating space to $X_A$ at $\mb 1$. Theorem~\ref{thm:unique} implies that
   $X_B = X_{A^{(k)}}$.
 \end{proof}
 
\begin{example}
Consider the Del Pezzo surface $X\subset \mathbb P^6$  of degree 6 given by the parameterization
\[\iota\colon (t_0,t_1,t_2)\mapsto (t_0:t_0t_1:t_0t_2:t_0t_1t_2:t_0t_1t_2^2:t_0t_1^2t_2:t_0t_1^2t_2^2).\]
We get
\[
A^{(2)}=\left(
\begin{array}{ccccccc} 
1&1&1 &1&1&1&1\\
0&1&0&1&1&2&2\\
0&0&1& 1&2&1&2\\
0&0&0&0&0&1&1\\
0&0&0&1&2&2&4\\
0&0&0&0&1&0&1
\end{array}\right).
\]
Consider new variables $u_{(2,0)}$, $u_{(1,1)}$ and $u_{(0,2)}$. Then we get a parameterization
\begin{multline*}
\iota_{A^{(2)}}\colon (t_0,t_1,t_2,u_{(2,0)},u_{(1,1)},u_{(0,2)})\mapsto (1:t_1:t_2:t_1t_2:t_1t_2u_{(1,1)}:\\
t_1t_2^2 u_{(2,0)}u_{(1,1)}^2:t_0t_1^2t_2u_{(1,1)}^2u_{(0,2)}:t_0t_1^2t_2^2u_{(2,1)}u_{(1,1)}^4u_{(0,2)}).
\end{multline*}
We could also subtract twice the sum of the fourth and sixth rows from the fifth row, which gives
the vector $(0,0,0,1,0,0,0)$, so that we get the parameterization:
\begin{multline*}
\iota_{A^{(2)}}\colon (t_0,t_1,t_2,u_{(2,0)},u_{(1,1)},u_{(0,2)})\mapsto \\(1:t_1:t_2:t_1t_2:t_1t_2u_{(1,1)}:t_1t_2^2 u_{(2,0)}:t_0t_1^2t_2u_{(0,2)}:t_0t_1^2t_2^2u_{(2,0)}u_{(0,2)}).
\end{multline*}
The corank of $A^{(2)}$ is $1$ and the $2$nd toric interpolant is the degree $3$ hypersurface with equation $x_0 x_4 x_5 - x_1 x_2 x_6 = 0$. This equation does
not depend on $x_3$ because the configuration of columns of $A^{(2)}$ is a pyramid
with vertex on its fourth column, as by the previous calculation the fourth column vector lies in the hyperplane $t_4 - 2 t_3 - 2 t_5=1$, while all the other column vectors lie in the parallel
hyperplane $t_4- 2 t_3 - 2 t_5=0$.

The tangent space to $X_{A^{(2)}}$ at a point 
\[\iota_{A^{(2)}}(\mathbf t,\mathbf u)=\iota_{A^{(2)}}(1,t_1,t_2,u_{(2,0)},u_{(1,1)},u_{(0,2)})\]
is the row span of the matrix $A^{(2)}(\mathbf t,\mathbf u)$ given by
\[
\left(
\begin{array}{ccccccc} 1&t_1&t_2&t_1t_2u_{(1,1)}&t_1t_2^2u_{(1,1)}^2u_{(0,2)}&t_1^2t_2u_{(2,0)}u_{(1,1)}^2&t_1^2t_2^2u_{(2,0)}u_{(1,1)}^4u_{(0,2)}\\
0&1&0&t_2u_{(1,1)}&t_2^2u_{(1,1)}^2u_{(0,2)}&2t_1t_2u_{(2,0)}u_{(1,1)}^2&2t_1t_2^2u_{(2,0)}u_{(1,1)}^4u_{(0,2)}\\
0&0&1& t_1u_{(1,1)}&2t_1t_2u_{(1,1)}^2u_{(0,2)}&t_1^2u_{(2,0)}u_{(1,1)}^2&2t_1^2t_2u_{(2,0)}u_{(1,1)}^4u_{(0,2)}\\
0&0&0&0&0&t_1^2t_2u_{(1,1)}^2&t_1^2t_2^2u_{(1,1)}^4u_{(0,2)}\\
0&0&0&t_1t_2&2t_1t_2^2u_{(1,1)}u_{(0,2)}&2t_1^2t_2u_{(2,0)}u_{(1,1)}&4t_1^2t_2^2u_{(2,0)}u_{(1,1)}^3u_{(0,2)}\\
0&0&0&0&t_1t_2^2u_{(1,1)}^2&0&t_1^2t_2^2u_{(2,0)}u_{(1,1)}^4
\end{array}\right)
\]
For a point $\iota_{A^{(2)}}(t_1,t_2,1,1,1)\in X$ with $t_1,t_2\neq 0$, this is the same as the row span of the matrix
\[
\left(
\begin{array}{ccccccc} 1&t_1&t_2&t_1t_2&t_1t_2^2&t_1^2t_2&t_1^2t_2^2\\
0&1&0&t_2&t_2^2&2t_1t_2&2t_1t_2\\
0&0&1& t_1&2t_1t_2&t_1^2&2t_1^2t_2\\
0&0&0&0&0&1&t_2\\
0&0&0&1&2t_2&2t_1&4t_1t_2\\
0&0&0&0&1&0&t_1
\end{array}\right).
\]
which is the same as the second osculating space to $X$ at this point.
\end{example}

 \section{Toric curves}\label{sec:curves}

 We shall now study in more detail the case when $X$ is a curve. A toric curve $X\subset  \mathbb P^d$ which is not contained in a coordinate hyperplane, can be described as follows.
Given integers $\ell_0=0<\ell_1< \cdots < \ell_{d-1}<l_d$, set $\ell:=\{\ell_0,\dots,\ell_d\}$ and
\[A_{\ell,d}:= \left( \begin{array}{cccc}
1&1&\cdots &1\\
0&\ell_1&\cdots &\ell_d\\
\end{array}\right).\]
Let $X_{A_{\ell,d}}\subset \mathbb P^d$ be the rational curve of degree $\ell_d$ parameterized by 
\[t\mapsto (1:t^{\ell_1}:\cdots:t^{\ell_{d-1}}:t^{\ell_d}).\]
In the special case when $\ell_0=1$, $\ell_1=1$,\dots, $\ell_d=d$, the curve $X_{A_{\ell,d}}$ is a \emph{rational normal curve}.

The $k$th toric interpolant of $X_{A_{\ell,d}}$ is given by the matrix
\[ A_{\ell,d}^{(k)}= \left( \begin{array}{cccccc}
1&1&1&\cdots&1&1 \\
0 &\ell_1&\ell_2&\cdots&\ell_{d-1}&\ell_d\\
0&\binom{\ell_1}2&\binom{\ell_2}2&\cdots&\binom{\ell_{d-1}}2&\binom{\ell_d}2\\
\vdots&\vdots&\vdots&\cdots&\vdots&\vdots\\
0&\binom{\ell_1}k&\binom{\ell_2}k&\cdots &\binom{\ell_{d-1}}k&\binom{l\ell_d}k\\
\end{array}\right).\]

As we observed in the comment after Definition \ref{def:Ak} there exists a $(k+1)\times (k+1)$-matrix $M_k$ such that the matrix
\[ \widetilde A_{\ell,d}^{(k)}:= \left( \begin{array}{cccccc}
1&1&1&\cdots&1&1 \\
0 &\ell_1&\ell_2&\cdots&\ell_{d-1}&\ell_d\\
0&\ell_1^2&\ell_2^2&\cdots&\ell_{d-1}^2&{\ell_d}^2\\
\vdots&\vdots&\vdots&\cdots&\vdots&\vdots\\
0&\ell_1^k&\ell_3^k&\cdots &\ell_{d-1}^k&\ell_d^k\\
\end{array}\right)\]
is equal to $M_k \cdot A_{\ell,d}^{(k)}$, with $\det M_k=\prod_{j=1}^k j!$.
The column vectors of this second matrix $\widetilde A_{\ell,d}^{(k)}$ form the vertices of a cyclic polytope $C_{\ell,d}^{(k)}$.
They are ordered points on a moment curve.  Here, ordered means naively that the second coordinates are increasing, but the main point is that all the maximal minors of the matrix (with the corresponding ordered columns)  are positive.  Therefore, we can compute the degree of the variety $X_{A_{\ell,d}}^{(k)} = X_{\widetilde A_{\ell,d}}^{(k)}$
as the normalized lattice volume of the cyclic polytope $C_{\ell,d}^{(k)}$ with respect
to the lattice generated by the columns of the matrix $A_{\ell,d}^{(k)}$, which are all consecutive vertices of the cyclic polytope. 
Note that the matrices $\widetilde A_{\ell,d}^{(k)}$ are {\em positroids}:  their maximal minors are positive since they are equal to Vandermonde determinants and the $\ell_i$ satisfy $\ell_i < \ell_j$ for $i < j$. Since $A_{l,d}^{(k)}=M_k^{-1} \cdot \widetilde A_{l,d}^{(k)}$, with $\det M_k^{-1}>0$, also $A_{\ell,d}^{(k)}$ is a positroid. 

In general, the {\em lattice volume} of a $k$-dimensional lattice polytope is $k!$ times its Euclidean volume and will be denoted by $\Vol$.
By decomposing the polytope into simplices, $\Vol(C_{\ell,d}^{(k)})$ can be computed as the sum of the volumes of these simplices, which will
 all be expressed as Vandermonde determinants in $\ell_1,\dots,\ell_d$. To decompose the polytope, one can use a classical characterization 
 of its facets, for an account of this, see \cite{EFP}.
Once one knows the volume, one can compute the number of lattice points of the polytope by
the formula given by 
Liu \cite{Liu}*{Thm.~1.2, p.~112}:
\[i(C_{\ell,d}^{(k)})=\sum_{j=0}^k \Vol C_{\ell,d}^{(j)},\]
where $\Vol C_{\ell,d}^{(0)}:=1$. 

To illustrate this situation, assume that $k=2$. Then $C_{\ell,d}^{(2)}$ is a cyclic polygon.

\begin{prop}\label{curves}
The lattice volume of and the number of lattice points in $C_{\ell,d}^{(2)}$ are given as
\[\Vol C_{\ell,d}^{(2)}=\sum_{i=1}^{d-1} \ell_i \ell_{i+1}(\ell_{i+1}-\ell_i).\]
and
\[i(C_{\ell,d}^{(2)})=\frac{1}2 \sum_{i=1}^{d-1} \ell_i \ell_{i+1}(\ell_{i+1}-\ell_i)+\ell_d +1.\]
\end{prop}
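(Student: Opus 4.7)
My plan is to treat the two statements separately: compute $\Vol C_{\ell,d}^{(2)}$ by a fan triangulation of the cyclic polygon, and then obtain the lattice-point count by substituting into the recursion of~\cite{Liu} quoted in the paper.

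I first identify $C_{\ell,d}^{(2)}$ with the convex hull in $\R^2$ of the points $v_j:=(\ell_j,\ell_j^2)$, $j=0,\ldots,d$, obtained by dropping the constant first row of $\widetilde A_{\ell,d}^{(2)}$. Because these points lie on the convex parabola $y=x^2$ with strictly increasing $x$-coordinates, every $v_j$ is a vertex and the vertices occur in the listed order along the boundary. Fanning diagonals from $v_0=(0,0)$ (using the hypothesis $\ell_0=0$) therefore produces a valid triangulation into the $d-1$ triangles $T_i:=\Conv(v_0,v_i,v_{i+1})$, $i=1,\ldots,d-1$. The lattice volume of each $T_i$ equals the Vandermonde-type determinant
\[
\det\begin{pmatrix}\ell_i&\ell_i^2\\ \ell_{i+1}&\ell_{i+1}^2\end{pmatrix}=\ell_i\ell_{i+1}(\ell_{i+1}-\ell_i),
\]
which is positive since $0<\ell_i<\ell_{i+1}$. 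Because $\Vol$ is additive across the triangulation, summing over $i$ delivers the first claimed formula.

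For the lattice-point count I substitute into Liu's recursion $i(C_{\ell,d}^{(2)})=\Vol C_{\ell,d}^{(0)}+\Vol C_{\ell,d}^{(1)}+\Vol C_{\ell,d}^{(2)}$. The $j=0$ term equals $1$ by convention, the $j=1$ term equals the length of the interval $[0,\ell_d]$, namely $\ell_d$, and the $j=2$ term is the Euclidean area of $C_{\ell,d}^{(2)}$, which is exactly $\tfrac{1}{2}$ of the lattice volume just computed. Combining these gives the second claimed identity.

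The one point that requires care is reconciling the two conventions for $\Vol$: the paper's $\Vol$ is the lattice volume equal to $k!$ times Euclidean volume, whereas in Liu's recursion the $j$-dimensional summand is the ordinary $j$-dimensional Euclidean volume, so the factor $\tfrac{1}{2}=\tfrac{1}{2!}$ must appear in front of the $j=2$ contribution when it is re-expressed in terms of the lattice volume computed in the first part. Aside from this bookkeeping, the proof reduces to the convex position of points on the parabola, a fan triangulation, and a single Vandermonde evaluation.
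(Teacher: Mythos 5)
Your proof is correct and follows the same route as the paper's: fan triangulation of the cyclic polygon from the vertex $(0,0)$, with each triangle's area evaluated as a $2\times 2$ Vandermonde determinant, followed by Liu's recursion. You are in fact more careful than the paper's terse three-line proof about the lattice-versus-Euclidean volume bookkeeping in Liu's formula (the paper uses $\Vol$ for both without comment), and that attention to the factor of $\tfrac{1}{2}$ is exactly what makes the stated formula for $i(C_{\ell,d}^{(2)})$ come out right.
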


\begin{proof}
We write $C_{\ell,d}^{(2)}$  as the union of the triangles with vertices $(0,0)$, $(\ell_i,\ell_i^2)$,and $(\ell_{i+1},\ell_{i+1}^2)$.
The lattice area of the triangles are $\frac{1}2 \ell_i \ell_{i+1}(\ell_{i+1}-\ell_i)$. The lattice length of 
$C_{\ell,d}^{(1)}$ is $\ell_d$.
\end{proof}

Assume  $\ell_0=0, \ell_1=1,\ldots, \ell_d=d$ and $k=2$.
If we instead use the matrix 
\begin{equation} \label{ad2}
A_d^{(2)}= \left( \begin{array}{cccccccccc}
1&1&1&1&1&\cdots&1&1&1&1 \\
0 &1&2&3 &4&\cdots&d-3&d-2&d-1&d\\
0&0&1&3&6&\cdots&\binom{d-3}2&\binom{d-2}2&\binom{d-1}2&\binom{d}2\\
\end{array}\right),
\end{equation}
 we get a different polygon, which we call $P(d)$. It is easy to see that the lattice area of $P(d)$ coincides with its normalized area and thus gives the degree of the associated
 toric variety $X_{A_d^{(2)} } = X_{\widetilde A_{\ell,d}^{(2)}}$.

 \begin{figure}\label{P(d)}
  \includegraphics[angle=0,width=0.4\textwidth]{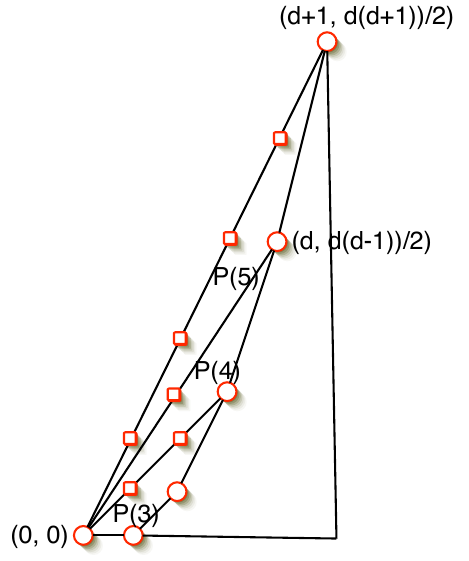}
    \caption{The polygons $P(d)$.}
    \end{figure}

The $(d-1)$th toric interpolant of $X_{A_d}$ is given by the matrix
\[ A_d^{(d-1)}= \left( \begin{array}{cccccccccc}
1&1&1&1&1&\cdots&1&1&1&1 \\
0 &1&2&3 &4&\cdots&d-3&d-2&d-1&d\\
0&0&1&3&6&\cdots&\binom{d-3}2&\binom{d-2}2&\binom{d-1}2&\binom{d}2\\
\vdots&\vdots&\vdots&\vdots&\vdots&\cdots&\vdots&\vdots&\vdots&\vdots\\
0&0&0&0&0&\cdots&0&0&1&\binom{d}{d-1}\\
\end{array}\right).\]
Hence we get the following parameterization of $X_{A_d^{(d-1)}}$:
\[(t_0,t_1,\dots,t_{d-1})\mapsto (t_0:t_0t_1:t_0t_1^2t_2:\cdots:t_0t_1^dt_2^{\binom{d}2}\cdots t_{d-1}^d)\in \mathbb P^d.\]

\begin{prop} \label{prop:d-1}
 $X_{A_d^{(d-1)}}$ is a toric hypersurface of degree $2^{d-1}$. 
\end{prop}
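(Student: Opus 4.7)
The plan is to compute the kernel of $A_d^{(d-1)}$ explicitly and then read off the degree of the resulting binomial hypersurface. The matrix $A_d^{(d-1)} \in \Z^{d \times (d+1)}$ has entries $\bigl(A_d^{(d-1)}\bigr)_{i,j} = \binom{j}{i}$ for $0 \le i \le d-1$ and $0 \le j \le d$. Hence a vector $(c_0,\dots,c_d)\in\Q^{d+1}$ lies in $\Ker A_d^{(d-1)}$ if and only if $\sum_{j=0}^{d} \binom{j}{i} c_j = 0$ for every $i=0,\dots,d-1$. Recognizing $\binom{j}{i}$ as the coefficient of $x^i$ in $(1+x)^j$, this is equivalent to the condition that the polynomial
\[
f(x) := \sum_{j=0}^{d} c_j (1+x)^j
\]
has vanishing coefficients in degrees $0,1,\dots,d-1$, i.e.\ $f(x) = c\cdot x^d$ for some $c \in \Q$.

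Substituting $y = 1+x$, this reads $\sum_{j=0}^d c_j y^j = c(y-1)^d$, so that $c_j = c\,(-1)^{d-j}\binom{d}{j}$. Taking $c=1$ produces a primitive integer vector (note that $c_d = 1$, so the entries are automatically coprime), and shows $\dim\Ker A_d^{(d-1)} = 1$. In particular $\rank A_d^{(d-1)} = d$, so $X_{A_d^{(d-1)}}$ has codimension one in $\mathbb P^d$, and the associated toric ideal is the principal prime ideal generated by the binomial
\[
\prod_{\substack{0\le j\le d\\ d-j\,\text{even}}} x_j^{\binom{d}{j}} \;-\; \prod_{\substack{0\le j\le d\\ d-j\,\text{odd}}} x_j^{\binom{d}{j}}.
\]
The degree of this hypersurface equals the total degree of either monomial, namely $\sum_{d-j\,\text{even}}\binom{d}{j}$, which is the standard $2^{d-1}$ obtained from $(1+1)^d + (1-1)^d = 2\cdot 2^{d-1}$.

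There is no substantive obstacle: the only thing to check beyond the kernel computation is that the primitive kernel vector yields an irreducible binomial, which is immediate from $\gcd(c_0,\dots,c_d)=1$ (indeed $c_d=1$). As a sanity check, one may verify the cases $d=2$ (giving $x_0x_2 - x_1^2$, degree $2$) and $d=3$ (giving $x_0x_2^3 - x_1^3 x_3$, degree $4$), in accordance with $2^{d-1}$.
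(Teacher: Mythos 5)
Your proof is correct and follows essentially the same route as the paper: both establish that the kernel of $A_d^{(d-1)}$ is one-dimensional and spanned by the vector with entries $(-1)^{d-j}\binom{d}{j}$, then read off the binomial equation and sum binomial coefficients to get degree $2^{d-1}$. The only (cosmetic) differences are that you encode the kernel condition via the generating function $\sum_j c_j(1+x)^j = c\,x^d$ whereas the paper phrases it as $f_v(t)=\sum_i v_i t^i$ vanishing to order $d$ at $t=1$ (the two are the same statement after the substitution $t=1+x$), and that you compute $\sum_{d-j\text{ even}}\binom{d}{j}=2^{d-1}$ via $(1+1)^d+(1-1)^d$ while the paper uses Pascal's rule $\binom{d}{i}=\binom{d-1}{i-1}+\binom{d-1}{i}$.
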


\begin{proof}
It is clear that the rank of $A_d^{(d-1)}$ is $d$ as its  maximal minor corresponding to its first columns is equal to $1$. Then, its kernel has dimension one and so  $X_{A_d^{(d-1)}}$ is a hypersurface.
It is easy to check that a $(d+1)$-vector $v$ lies in the kernel of $A_d^{(d-1)}$ if and only if the polynomial $f_v(t) = \sum_{i=0}^d v_i t^i$ vanishes at $t=1$ jointly with its
derivatives up to order $(d-1)$. Then the  vector $w$ with coordinates $w_i = (-1)^{d-i} \binom{d}{i}$, satisfies that $f_w= (x-1)^d$, and so $w$ is a generator of the kernel.
Separating its positive from its negative entries, we get that the ideal of $X_{A_d^{(d-1)}}$ is generated by the binomial
\[x_0x_2^{\binom{d}2}\cdots x_{d-2}^{\binom{d}{d-2}}x_d - x_1^{\binom{d}1}x_3^{\binom{d}3} \cdots x_{d-1}^{\binom{d}{d-1}}=0 \]
if $d$ is even,
and 
\[ x_0x_2^{\binom{d}2}\cdots x_{d-1}^{\binom{d}{d-1}} - x_1^{\binom{d}1}x_3^{\binom{d}3} \cdots x_{d-2}^{\binom{d}{d-2}}x_d=0 \]
if $d$ is odd.  By using $\binom{d}i=\binom{d-1}{i-1}+\binom{d-1}i$, we see that all four monomials appearing in these two equations have degree $\sum_{i=0}^{d-1}\binom{d-1}i=2^{d-1}$.
\end{proof}

In \cite{PosGeo}, the concept of \emph{positive geometries} was introduced in the study of scattering amplitudes in particle physics. A positive geometry is a real semi-algebraic set, together with a rational differential form, with poles along the boundary of the semi-algebraic set, called the \emph{canonical form}. Of special interest were semi-algebraic sets given by certain polytopes, called \emph{generalized  tree amplituhedra}, of type $\mathcal A_{n,k,m}(Z)$, where $Z$ is a $(k+m)\times n$-matrix with positive maximal minors -- a positroid -- in the real
positive Grassmann variety $\mathbb G^{\ge 0}_{k,k+m}$ \cite{ampli}.
The polygon $P(d)$ is \emph{cyclic} and is an example of a generalized  tree amplituhedron of type 
$\mathcal A_{d+1,1,2}(Z)$ in  $\mathbb G^{\ge 0}_{1,3}=(\mathbb P^2)^{\ge 0}$, with $Z=A_d^{(2)}$.

Any convex polygon gives rise to a 
 positive geometry, hence has an associated canonical form. In particular, $P(d)$ gives rise to a positive geometry.
The following example shows an explicit computation for any $d$ of the procedure outlined in~\cite{PosGeo}*{\S~7.1.1} for computing the canonical form of a cyclic polytope.
 Using the description of the facets of combinatorial cyclic polytopes~\cite{CD}, our computation could be extended to higher interpolants of toric curves.

\begin{example}\label{prop:pos}

The polygon $P(d)$ can be decomposed as the polygon $P(d-1)$ union the triangle $\Delta(d)$ with vertices $(0,0)$, $(d-1,\binom{d-1}2)$, and $(d,\binom{d}2)$.
The sides of $\Delta(d)$ have equations $2y-(d-2)x=0$, $2y-2(d-1)x+d(d-1)=0$, and $(d-1)x-2y=0$. Hence we compute the
canonical form (see \cites{PosGeo,adj}) of the triangle to be
\[\Omega(\Delta(d))=\frac{2d(d-1)}{(2y-(d-2)x)(2y-2(d-1)x+d(d-1))((d-1)x-2y)} dx\wedge dy.
\]
The additivity of the canonical form \cite{adj}*{2.4} then gives
\[\Omega(P(d))=\frac{2d(d-1)\, dx\wedge dy}{(2y-(d-2)x)(2y-2(d-1)x+d(d-1))((d-1)x-2y)} + \Omega(P(d-1))\]
Hence the canonical form of the polygon $P(d)$ equals
\[\Omega(P(d))=\sum_{i=2}^d \frac{2i(i-1)}{(2y-(i-2)x)(2y-2(i-1)x+i(i-1))((i-1)x-2y)} dx\wedge dy.\]

In particular, we get
\[\Omega(P(2))=\frac{1}{y(y-x+1)(x-2y)} dx\wedge dy\]
and
\[ \Omega(P(3))=\frac{2y-2x+3}{y(y-x+1)(y-2x+3)(x-y)} dx\wedge dy.\]
The latter is consistent with the fact that the adjoint curve to $P(3)$ is the line $2y-2x+3=0$ \cite{adj}*{2.1}.
\end{example}

\section{Normalizations and dual varieties} \label{sec:dual}
In projective geometry, it is interesting to determine whether a given linear projection is \emph{general}. Indeed, a character (a cycle class, or a number) of a projective variety is said to be \emph{projective} if it is invariant under a \emph{general} linear projection. 
An example of a projective character of a variety is its degree, and also the degree of its dual variety, i.e., the variety in the dual projective space equal to the (closure of) the set of tangent hyperplanes. 

 Given a lattice point configuration $A\subset \mathbb Z^{m+1}$, let $\overline{A}\subset \mathbb Z^{m+1}$ 
 denote the configuration of lattice points in the convex hull of $A$ in $\R^{m+1}$,     
 and denote by $\overline A$ 
 the corresponding matrix. We obtain a \emph{normal} toric variety $X_{\overline A} \subset \mathbb P^N$, 
 where $N+1$ is the number of lattice points in the configuration $\overline{A}$. Since the matrix $A$ is obtained from $\overline A$ by removing some columns, $X_A$ is equal to a linear projection of its normalization $X_{\overline A}$.

We now return to the second toric interpolant $X_{A_d^{(2)}}$ of the rational normal curve $X_A$ of degree $d$ studied in the previous section, see (\ref{ad2}). We assume $d\ge 3$.
We choose this case as an example since it is computable, and we show that 
$X_{A_d^{(2)}}$ is \emph{not} a general linear projection of its normalization $X_{\overline{A_d^{(2)}}}$. For $d\ge 4$,  this follows since the degrees of their dual varieties are different. For $d=3$, the surface $X_{A_3^{(2)}}\subset \mathbb P^3$ has a singular line of multiplicity 3, hence is not a general projection of a smooth surface in $\mathbb P^5$.

\begin{prop}
The degree of $X_{A_d^{(2)}}\subset \mathbb P^d$ is $\binom{d+1}3$, and it is a toric linear projection of its normalization 
$X_{\overline{A_d^{(2)}}}$ from $\mathbb P^{\frac{1}{12}d(d^2+8)}$ if $d$ is even and from $\mathbb P^{\frac{1}{12}d(d^2+11)}$ if $d$ is odd.
\end{prop}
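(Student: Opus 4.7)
The plan is to compute both the degree of $X_{A_d^{(2)}}$ and the ambient dimension $N$ of its normalization by a direct polygonal analysis of the polygon $P(d) = \mathrm{conv}\{(i, \binom{i}{2}) : 0 \le i \le d\}$. Since the successive differences of consecutive columns of $A_d^{(2)}$ include $(1,0)$ and $(1,1)$, we have $\Z A_d^{(2)} = \Z^2$; therefore $\deg X_{A_d^{(2)}} = \Vol(A_d^{(2)})$ equals twice the Euclidean area of $P(d)$, i.e., the normalized lattice area, as in the discussion after Proposition~\ref{curves}.

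For the degree, I would triangulate $P(d)$ using $(0,0)$ as common apex together with the $d-1$ lower edges $(i,\binom{i}{2}) \to (i+1,\binom{i+1}{2})$ for $i = 1,\dots,d-1$. A direct determinant computation gives the normalized area of each triangle as
\[
\left|\det\begin{pmatrix} i & \binom{i}{2}\\ i+1 & \binom{i+1}{2}\end{pmatrix}\right| = \left|i \binom{i+1}{2} - (i+1)\binom{i}{2}\right| = \binom{i+1}{2},
\]
and the hockey-stick identity $\sum_{i=1}^{d-1}\binom{i+1}{2} = \binom{d+1}{3}$ then yields $\deg X_{A_d^{(2)}} = \binom{d+1}{3}$.

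To determine $N$, I would use that the normalization $X_{\overline{A_d^{(2)}}}\subset\mathbb P^N$ satisfies $N+1 = \#(P(d)\cap\Z^2)$ and compute this count via Pick's theorem $i(P(d)) = \mathrm{Area}(P(d)) + B/2 + 1$. The boundary of $P(d)$ is the union of the $d$ lower edges, each with direction $(1,i)$ of gcd $1$, together with the top edge from $(0,0)$ to $(d,\binom{d}{2})$. Since $\gcd(d,\, d(d-1)/2)$ equals $d/2$ when $d$ is even (as $d-1$ is odd) and equals $d$ when $d$ is odd, we get $B = 3d/2$ for $d$ even and $B = 2d$ for $d$ odd. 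Substituting $\mathrm{Area}(P(d)) = \binom{d+1}{3}/2 = (d^3-d)/12$ and simplifying produces
\[
i(P(d)) = \tfrac{1}{12}d(d^2+8) + 1 \quad (d\ \text{even}), \qquad i(P(d)) = \tfrac{1}{12}d(d^2+11) + 1 \quad (d\ \text{odd}).
\]

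Finally, since each column $(i,\binom{i}{2})$ of $A_d^{(2)}$ lies on the strictly convex graph $y = \binom{x}{2}$, it is a vertex of $P(d)$ and hence belongs to $\overline{A_d^{(2)}}$; the coordinate projection $\mathbb P^N \dashrightarrow \mathbb P^d$ that retains only the coordinates indexed by the columns of $A_d^{(2)}$ is torus-equivariant and carries $X_{\overline{A_d^{(2)}}}$ onto $X_{A_d^{(2)}}$. The source $\mathbb P^N$ is then precisely $\mathbb P^{d(d^2+8)/12}$ for $d$ even and $\mathbb P^{d(d^2+11)/12}$ for $d$ odd, as claimed. The only subtle step is the parity case analysis for $\gcd(d,\binom{d}{2})$; the rest is routine polygonal bookkeeping.
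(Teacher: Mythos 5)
Your proof is correct and takes essentially the same approach as the paper: compute the normalized lattice area of $P(d)$ for the degree, then count lattice points via Pick's theorem with a parity case analysis for the boundary of the long edge. The only cosmetic difference is that you triangulate $P(d)$ directly from $(0,0)$ and sum via the hockey-stick identity, whereas the paper phrases the same decomposition as a recursion $\Vol P(d) = \Vol P(d-1) + \binom{d}{2}$; unrolling that recursion produces exactly your fan of triangles.
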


\begin{proof}
The lattice polygon $P(d)$ in the plane corresponding to $A_d^{(2)}$ contains the lattice polygon $P(d-1)$ corresponding to $A_{d-1}^{(2)}$, see Fig.~1.
It is easy to see that the difference of the two polygons has lattice area equal to 
\[d\binom{d}2 -(d-1)\binom{d-1}2 - 2\binom{d-1}2 -\binom{d}2+\binom{d-1}2=\binom{d}2.\] 
Hence we get
\[ \deg X_{A_d^{(2)}} = \deg X_{A_{(d-1)}^{(2)}} + \binom{d}2=\cdots = \sum_{j=2}^d \binom{j}2=\binom{d+1}3.\]

Let $p(d)$ denote the perimeter of $P(d)$. This polygon has $d +1$ vertices and $d +1$ edges. Of the latter, $d$  have lattice length 1. 
The edge between $(d,\binom{d}2)$ and $(0,0)$ is a segment of the line $y=\frac{d-1}2 x$, which contains $\frac{1}2(d-2)$ lattice points 
other than the vertices if $d$ is even and $d-1$ if $d$ is  odd. Hence $p(d)=d+\frac{1}2(d-2)+1=\frac{3}2 d$ if $d$ is even and $p(d)=d+d-1+1=2d$ if $d$ is odd.
It then follows from Pick's formula that the number of lattice points in $P(d)$ is $\frac{1}2 \binom{d+1}3 +\frac{1}2\cdot \frac{3}2 d +1
=\frac{1}{12}d(d^2+8)+1$ if $d$ is even and $\frac{1}2 \binom{d+1}3 +\frac{1}2 \cdot 2d+1=\frac{1}{12}d(d^2+11)+1$ if $d$ is odd.
\end{proof}

\bigskip

\begin{prop}\label{rn}
The surface $X_{\overline{A_3^{(2)}}}$
is a quartic nonsingular surface in $\mathbb P^5$.
For $d\ge 4$, the surface $X_{\overline{A_d^{(2)}}}$ has two singular points, both of multiplicity $d-1$ if $d$ is even and  of multiplicity $\frac{1}2 (d-1)$ if $d$ is odd.
\end{prop}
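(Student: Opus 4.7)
The plan is to identify $X_{\overline{A_d^{(2)}}}$ with the normal projective toric surface associated to the lattice polygon $P(d)$, whose vertices are $v_i=(i,\binom i 2)$ for $i=0,1,\ldots,d$, and then to analyze the singularities vertex by vertex. Since a normal toric surface is smooth outside its $T$-fixed points, which are in bijection with the vertices of $P(d)$, it suffices to examine each vertex $v$ separately. I will use the standard toric fact that the multiplicity of $X_{\overline{A_d^{(2)}}}$ at the fixed point $p_v$ is the index $[\Z^2:\Z u_1+\Z u_2]=|\det(u_1,u_2)|$, where $u_1,u_2\in\Z^2$ are the primitive direction vectors of the two edges of $P(d)$ meeting at $v$.

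First I would check that every interior vertex $v_i$ with $1\le i\le d-1$ is smooth: the two edge directions $v_{i+1}-v_i=(1,i)$ and $v_{i-1}-v_i=(-1,-(i-1))$ are primitive and have determinant $-1$. The main work then lies at the extreme vertices $v_0$ and $v_d$, where one edge runs along the hypotenuse $\overline{v_0v_d}$. The primitive direction along this hypotenuse depends on the parity of $d$: since $\gcd(d,\tfrac{d(d-1)}{2})$ equals $d/2$ for $d$ even and $d$ for $d$ odd, the primitive direction is $(2,d-1)$ in the even case and $(1,\tfrac{d-1}{2})$ in the odd case. Pairing with the primitive direction $(1,0)$ of the other edge at $v_0$, and symmetrically at $v_d$, a determinant computation yields multiplicity $d-1$ if $d$ is even and $\tfrac{d-1}{2}$ if $d$ is odd, which proves the statement for $d\ge 4$.

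For $d=3$ we have $\tfrac{d-1}{2}=1$, so the extreme vertices are also smooth, and $X_{\overline{A_3^{(2)}}}$ is a smooth projective toric surface. Its degree equals the normalized area of $P(3)$, which by the preceding proposition is $\binom 4 3=4$, and the ambient projective space is $\mathbb P^N$ with $N+1$ equal to the number of lattice points of $P(3)$: the four vertices, the two non-vertex points $(1,1),(2,2)$ on the hypotenuse, and (by Pick's formula) no interior lattice points, giving $N+1=6$. Thus $X_{\overline{A_3^{(2)}}}\subset\mathbb P^5$ is a smooth quartic surface, completing the proof. I expect the only subtle step to be the parity-dependent gcd computation along $\overline{v_0v_d}$; the rest is a routine application of the lattice-polygon/normal-toric-surface dictionary.
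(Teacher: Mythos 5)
Your proof is correct and follows essentially the same route as the paper's: reduce to the two extremal vertices $(0,0)$ and $(d,\binom{d}{2})$ of $P(d)$, compute the primitive edge directions (with the parity-dependent gcd along the chord), and read off the multiplicity as the $2\times 2$ determinant. The paper cites the same determinant criterion (GKZ, Thm.~3.14) and performs the same computation, so the two arguments coincide in substance.
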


\begin{proof}
The vertices of the polygon $P(d)$ are $(0,0), (1,0),\cdots, (d-1,\binom{d-1}2), (d,\binom{d}2)$. 
It is easy to check that the vertices $(1,0),\cdots, (d-1,\binom{d-1}2)$ of $P(d)$ are nonsingular, so it remains to check the vertices $(0,0)$ and $(d,\binom{d}2)$ \cite{GKZ}*{Thm.~3.14, p.~186}: 

\begin{itemize}
\item If $d$ is even:
\[
m_{(0,0)}=|\det\left(
\begin{array}{cc} 1&2\\
0&d-1
\end{array}\right)|=d-1, \]
\[
m_{(d, \binom{d}{2})}=|\det\left(\begin{array}{cc}
2&1\\
d-1&d-1
\end{array}\right)|=d-1.
\]
 \item If $d$ is odd:
\[
m_{(0,0)}=|\det\left(
\begin{array}{cc} 1&1\\
0&\frac{1}2(d-1)
\end{array}\right)|=\frac{1}{2}(d-1), \]
\[
m_{(d,
\binom{d}{2})}=|\det\left(\begin{array}{cc} 1&1\\
\frac{1}{2}(d-1)&d-1
\end{array}\right)|=\frac{1}{2}(d-1).
\]
 \end{itemize}
 This proves the proposition.
\end{proof}

Finally, we derive formulas for the degrees of the dual varieties, which enable us to determine whether the linear projection is generic. 

\begin{prop}
The degree of the dual variety $(X_{\overline{A_d^{(2)}}})^\vee$ is equal to $\binom{d-1}2(d+3)$
if $d$ is even and $\frac{1}2(d-1)(d^2+d-8)$ if $d$ is odd.

The degree of the dual variety $(X_{A_d^{(2)}})^\vee$ is equal to 
$\binom{d-1}2(d+1)$.
\end{prop}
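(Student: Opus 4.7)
Both dual degrees are computed using the standard combinatorial formula for the degree of the dual variety of a toric surface, applied to the polygon $P(d)$ and, in the non-normalized case, to the vertex configuration of $P(d)$.

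For $\deg(X_{\overline{A_d^{(2)}}})^\vee$, I would invoke the Pl\"ucker--Teissier formula in the form $\deg X^\vee = 3 L^2 + 2 K_X \cdot L + c_2^{\mathrm{Ma}}(X)$, where $c_2^{\mathrm{Ma}}$ is the Chern--Mather number, equal to the number of toric fixed points minus a Hirzebruch--Jung contribution at each singular vertex. The combinatorial inputs read off $P(d)$ are: $L^2 = \binom{d+1}{3}$ (from the degree computation immediately preceding Proposition~\ref{rn}), $-K_X \cdot L$ equal to the lattice perimeter, namely $3d/2$ for $d$ even and $2d$ for $d$ odd, and the vertex count $d+1$. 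The two singular vertices $(0,0)$ and $(d,\binom{d}{2})$ identified in Proposition~\ref{rn} have cyclic quotient type $\tfrac{1}{d-1}(1,d-3)$ for $d$ even and $\tfrac{1}{(d-1)/2}(1,(d-3)/2)$ for $d$ odd (read directly from the primitive edge directions at each vertex), with Hirzebruch--Jung lengths $(d-2)/2$ and $(d-3)/2$ respectively. Substituting these data and simplifying polynomially yields $\binom{d-1}{2}(d+3)$ in the even case and $\tfrac{1}{2}(d-1)(d^2+d-8)$ in the odd case.

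For $\deg(X_{A_d^{(2)}})^\vee$, the same formula applies but now every face volume is taken with respect to the sublattice generated by $A\cap F$, where $A$ is the vertex configuration of $P(d)$. Among all faces, only the long edge $[(0,0),(d,\binom{d}{2})]$ has a smaller sublattice volume under this change: it becomes $1$, instead of $d/2$ (for $d$ even) or $d$ (for $d$ odd). The resulting substitution absorbs the parity distinction and produces the single uniform expression $\binom{d-1}{2}(d+1)$ for every $d$. As a sanity check, in the smooth case $d=3$ both formulas collapse to $4$, matching the direct Gauss-map computation on the quartic $x_1^3 x_3 - x_0 x_2^3 = 0$, whose dual is the quartic $u_0 u_2^3 = u_1^3 u_3$.

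The principal obstacle is the singularity bookkeeping at the two special vertices: extracting the cyclic quotient type from the cone generators and computing its Hirzebruch--Jung continued-fraction length correctly in each parity. A secondary subtlety is verifying that when one passes from the full-lattice configuration $\overline{A_d^{(2)}}$ to the vertex configuration $A_d^{(2)}$, \emph{only} the long edge's sublattice volume changes (the $d$ short edges of $P(d)$ already have lattice length $1$), which is exactly what produces the clean parity-independent answer $\binom{d-1}{2}(d+1)$. Once these combinatorial inputs are in hand, the remainder is elementary binomial algebra.
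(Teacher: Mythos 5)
Your computation of $\deg(X_{\overline{A_d^{(2)}}})^\vee$ (the normal case) is sound and runs parallel to the paper's. Both start from the same three-term toric formula of Matsui--Takeuchi/N\o dland, and the inputs you use — normalized area $\binom{d+1}{3}$, lattice perimeter $\tfrac{3}{2}d$ or $2d$, and two singular vertices with Euler obstruction $2-\tfrac{d}{2}$ (resp.\ $2-\tfrac{d-1}{2}$) — match the paper's. The paper reads the Euler obstruction off the polygon via N\o dland's ``remove the vertex and count newly-exposed lattice points'' rule, whereas you go through the Hirzebruch--Jung continued fraction of the cyclic quotient type; one can check that for these singularities your HJ lengths $\tfrac{d-2}{2}$ and $\tfrac{d-3}{2}$ coincide with the count $c$ that the paper feeds into $\Eu(v)=1-c$, so the two routes give the same numbers. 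Either bookkeeping is fine.

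The account of $\deg(X_{A_d^{(2)}})^\vee$ has a genuine gap. You assert that passing from $\overline{A_d^{(2)}}$ to the vertex configuration $A_d^{(2)}$ changes \emph{only} the sublattice volume of the long edge $\Delta$ (from $d/2$ or $d$ down to $1$) and that this single substitution produces $\binom{d-1}{2}(d+1)$. That is not what happens. In the Matsui--Takeuchi formula the edge enters through the product of its sublattice volume and the Euler obstruction of the corresponding orbit, and both change in a compensating way: for $X_{A_d^{(2)}}$ the edge $\Delta$ carries a non-normal singular curve, $\Eu(\Delta)=\tfrac{d}{2}$ (resp.\ $d$), and $\Vol_{\Z(A\cap\Delta)}(\Delta)=1$, so the edge contribution is unchanged. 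What actually changes is the Euler obstruction at the two extreme vertices, which jumps from $2-\tfrac{d}{2}$ to $\tfrac{d}{2}+1-\binom{d}{2}$ (and analogously in the odd case). This is a large correction — for $d=4$ it turns the total from $21$ into $15$ — and your ``only the edge volume changes'' substitution would instead \emph{raise} the answer (to $23$ for $d=4$). Moreover, the vertex singularities of $X_{A_d^{(2)}}$ are not cyclic quotient singularities (the surface is non-normal along $\Delta$ through each of them), so your Hirzebruch--Jung recipe for $c_2^{\mathrm{Ma}}$ does not apply to them as stated; you need the formula for Euler obstructions of non-normal toric orbits (as in Matsui--Takeuchi, Cor.\ 1.6) to get the correct vertex contributions, which is exactly what the paper does.
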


\begin{proof}
We shall apply the formula for the degree of the dual variety of a toric surface (see \cite{MaTa}*{Cor.~1.6, p.~2042}, \cite{Local}*{Section 5}).
In the first case the surface is normal, and we get
 \[ \deg (X_{\overline{A_d^{(2)}}})^\vee = 3 \Vol(P(d)) - 2 p(d) +\sum_v \Eu(v),\]
where $\Eu(v)$ denotes the local Euler obstruction at the point of $X_A$ corresponding to the vertex $v$ of $P(d)$. The local Euler obstruction of a variety is a constructible function, introduced by MacPherson in order to define Chern classes of singular varieties \cite{euler}.

For a normal toric surface, 
the local Euler obstruction at a vertex $v$ is equal to  2 minus the difference between the lattice area of $P(d)$ and that of the polytope 
$\Conv(P(d)\setminus \{v\})$ obtained by removing the vertex $v$ and taking the convex hull of all the remaining vertices \cite{MaTa}*{Cor.~4.4, p.~2052}. It can also be 
computed as $1-c$, where $c$ is the number if interior lattice points of $P(d)$ which are boundary points of $\Conv(P(d)\setminus \{v\})$ \cite{Local}*{Lemma~5.1, p.~514}.
We get
\[\Eu ((0,0))=\Eu((d,\binom{d}2))=2-\frac{1}2d\]
if $d$ is even, and
\[\Eu ((0,0))=\Eu((d,\binom{d}2))=2-\frac{1}2 (d-1)\]
if $d$ is odd. Hence we get
\[\deg (X_{\overline{A_d^{(2)}}})^\vee =
3\binom{d+1}3-2\cdot \frac{3}2 d +d-1+2(2-\frac{1}2d)=\frac{1}2(d^3-7d+6)\]
if $d$ is even, and
\[ \deg (X_{\overline{A_d^{(2)}}})^\vee =3\binom{d+1}3-2\cdot 2d +d-1+2(2-\frac{1}2(d-1))=\frac{1}2 (d^3-9d+8)\]
if $d$ is odd.

In the second case, the surface is singular along the curve corresponding to the edge $\Delta$ of $P(d)$ joining the vertices $v:=(0,0)$ and $v':=(d,\binom{d}2)$. 
In the formula for the degree of the dual surface, also the edge lengths need to be weighted by the Euler obstruction at a general point of the corresponding orbit. 
This gives  \cite{MaTa}*{Cor.~1.6, p.~2042} $\Eu(\Delta)=\frac{1}2 d$ if $d$ is even and $d$ if $d$ is odd. The Euler obstruction at each singular vertex 
is then $\Eu(v)=\Eu(v')=\frac{1}2 d+1-\binom{d}2$ if $d$ is even and $\Eu(v)=\Eu(v')=d+1-\binom{d}2$ if $d$ is odd. We get
\[\deg (X_{A_d^{(2)}})^\vee =
3\binom{d+1}3-2(\frac{1}2 d +d)+d-1+2(\frac{1}2d+1-\binom{d}2)=\frac{1}2(d^3-2d^2-d+2)\]
if $d$ is even, and
\[ \deg (X_{A_d^{(2)}})^\vee =3\binom{d+1}3-2(d+d) +d-1+2(d+1-\binom{d}2)=\frac{1}2 (d^3-2d^2-d+2)\]
if $d$ is odd.
\end{proof}

\begin{cor}
The surface $X_{A_d^{(2)}}$ is not a general linear projection of its normalization $X_{\overline{A_d^{(2)}}}$.
\end{cor}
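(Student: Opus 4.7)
The plan is to exploit the fact that the degree of the dual variety is a projective invariant, i.e., it is preserved under a general linear projection. If $X_{A_d^{(2)}}$ were a general linear projection of $X_{\overline{A_d^{(2)}}}$, then the degrees of their dual varieties would have to agree. So it suffices to compare the two formulas computed in the preceding proposition and verify they differ, at least for $d \ge 4$. The $d=3$ case requires a separate argument.

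First I would expand the expressions from the previous proposition. On the normalization side, $\binom{d-1}{2}(d+3) = \tfrac{1}{2}(d^3 - 7d + 6)$ for even $d$, and $\tfrac{1}{2}(d-1)(d^2+d-8) = \tfrac{1}{2}(d^3 - 9d + 8)$ for odd $d$. On the projected side, $\binom{d-1}{2}(d+1) = \tfrac{1}{2}(d^3 - 2d^2 - d + 2)$ for both parities. Subtracting, the discrepancy between $\deg (X_{\overline{A_d^{(2)}}})^\vee$ and $\deg (X_{A_d^{(2)}})^\vee$ is $(d-1)(d-2)$ when $d$ is even and $(d-1)(d-3)$ when $d$ is odd. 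Both expressions are strictly positive for $d \ge 4$, so the two dual degrees are distinct, and therefore the projection cannot be general.

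For the remaining case $d=3$, the difference above vanishes, so I would fall back on the observation already recorded in the text preceding Proposition~\ref{rn}: the surface $X_{A_3^{(2)}} \subset \mathbb P^3$ carries a singular line of multiplicity $3$. A general linear projection of a smooth surface in $\mathbb P^5$ to $\mathbb P^3$ has only an ordinary nodal (double) curve and finitely many pinch points as its singularities; in particular, no triple curve can appear. Since Proposition~\ref{rn} shows that $X_{\overline{A_3^{(2)}}}$ is a smooth quartic in $\mathbb P^5$, the presence of a triple line in its image rules out the projection being general.

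The only subtlety I foresee is to state cleanly the invariance principle being invoked: the degree of the dual variety is unchanged under a general linear projection (this is why one calls it a projective character, as recalled at the beginning of Section~\ref{sec:dual}). Once that is on the table, the proof is just an arithmetic comparison plus the $d=3$ multiplicity remark, so no substantial calculation remains beyond what the previous propositions already supply.
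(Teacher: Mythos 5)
Your proof is correct and follows essentially the same route as the paper: compare dual-variety degrees for $d\ge 4$, then handle $d=3$ separately via the triple line argument, since general projections of smooth surfaces in $\mathbb P^5$ to $\mathbb P^3$ have at worst an ordinary double curve. The explicit computation of the discrepancies $(d-1)(d-2)$ and $(d-1)(d-3)$ is a nice touch that the paper leaves implicit, but the substance is the same.
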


\begin{proof}
We have $\deg (X_{A_d^{(2)}})^\vee \ne \deg (X_{\overline{A_d^{(2)}}})^\vee$ for $d\ge 4$.
For $d=3$, we have
 $\deg (X_{\overline{A_3^{(2)}}})^\vee=\deg (X_{A_3^{(2)}})^\vee=4$.
The nonsingular quartic surface $X_{\overline{A_3^{(2)}}}\subset \mathbb P^5$ is given by the polygon $P(3)$. 
Its projection $X_{A_3^{(2)}}\subset \mathbb P^3$ has a triple line, corresponding to the edge $\Delta$; indeed, its equation is $x_0x_2^3-x_1^3x_3=0$. Since general projections to $\mathbb P^3$ of smooth surfaces in $\mathbb P^5$ do not have singular curves of multiplicity $> 2$, $X_{A_3^{(2)}}$ is not a general projection of $X_{\overline{A_d^{(2)}}}$, even though their dual varieties have the same degree.
\end{proof}

The surfaces $X_{\overline{A_4^{(2)}}}$  and $X_{\overline{A_5^{(2)}}}$ have two singular points with
 local Euler obstruction equal to 0, hence are Gorenstein varieties, whereas for $d\ge 6$,  $X_{\overline{A_d^{(2)}}}$ is not Gorenstein. 
 The surfaces $X_{A_d^{(2)}}$ are non-normal (they have a singular curve corresponding to the edge $\Delta$). 

\bigskip

\noindent {\bf References}

\begin{biblist}

\bib{PosGeo}{article}{
   author={Arkani-Hamed, Nima},
   author={Bai, Yuntao},
   author={Lam, Thomas},
   title={Positive geometries and canonical forms},
   journal={J. High Energy Phys.},
   date={2017},
   number={11},
   pages={039, front matter+121},
}

\bib{ampli}{article}{
  author={Arkani-Hamed, Nima},
  author={Trnka, Jaroslav},
  title={The Amplituhedron},
  journal={J. High Energ. Phys.},
  date={2014},
  number={30},
}

\bib{BL}{book}{
   author={Blaschke, W.},
   author={Leichtwei\ss , K.},
   title={Elementare Differentialgeometrie},
   language={German},
   series={Die Grundlehren der mathematischen Wissenschaften, Band 1},
   note={F\"{u}nfte vollst\"{a}ndig neubearbeitete Auflage von K. Leichtwei\ss },
   publisher={Springer-Verlag, Berlin-New York},
   date={1973},
   pages={x+369},
}

\bib{CD}{article}{
   author={Cordovil, Raul},
   author={Duchet, Pierre},
   title={Cyclic polytopes and oriented matroids},
   journal={European J. Combin.},
   volume={21},
   date={2000},
   number={1},
   pages={49--64},
   issn={0195-6698},
  }
  
\bib{ASR}{article}{
   author={Dickenstein, Alicia},
   author={Di Rocco, Sandra},
   author={Piene, Ragni},
   title={Higher order duality and toric embeddings},
   journal={Ann. Inst. Fourier},
   volume={64},
   date={2014},
   number={1},
   pages={375--400},
   issn={0373-0956},
}

\bib{AR}{article}{
   author={Dickenstein, Alicia},
   author={Piene, Ragni},
   title={Higher order selfdual toric varieties},
   journal={Ann. Mat. Pura Appl. (4)},
   volume={196},
   date={2017},
   number={5},
   pages={1759--1777},
}

\bib{ES}{article}{
    AUTHOR = {Eisenbud, David},
    author = {Sturmfels, Bernd},
     TITLE = {Binomial ideals},
   JOURNAL = {Duke Math. J.},
    VOLUME = {84},
      date = {1996},
    NUMBER = {1},
     PAGES = {1--45},
}

\bib{EFP}{article}{
   author={Eu, Sen-Peng},
   author={Fu, Tung-Shan},
   author={Pan, Yeh-Jong},
   title={The cyclic sieving phenomenon for faces of cyclic polytopes},
   journal={Electron. J. Combin.},
   volume={17},
   date={2010},
   number={1},
   pages={Research Paper 47, 17},
}

\bib{GKZ}{book}{
   author={Gelfand, I. M.},
   author={Kapranov, M. M.},
   author={Zelevinsky, A. V.},
   title={Discriminants, resultants and multidimensional determinants},
   date={1994},
   publisher={Birkh\"{a}user Boston, Inc., Boston, MA},
}

\bib{K}{article}{
   author={Kleiman, Steven L.},
   title={The enumerative theory of singularities},
   conference={
      title={Real and complex singularities},
      address={Proc. Ninth Nordic Summer School/NAVF Sympos. Math., Oslo},
      date={1976},
   },
   book={
      publisher={Sijthoff \& Noordhoff, Alphen aan den Rijn},
   },
   isbn={90-286-0097-3},
   date={1977},
   pages={297--396},
}

\bib{adj}{article}{
  author={Kohn, Kathl\'en},
  author={Piene, Ragni},
  author={Ranestad, Kristian},
  author={Rydell, Felix},
  author={Shapiro, Boris},
  author={Sinn, Rainer},
  author={Sorea, Miruna-Stefana},
  author={Telen, Simon},
  title={Adjoints and canonical forms of polypols},
  journal={arXiv:2108.11747 [math.AG]},
  }
  
  \bib{Liu}{article}{
   author={Liu, Fu},
   title={Ehrhart polynomials of cyclic polytopes},
   journal={J. Combin. Theory Ser. A},
   volume={111},
   date={2005},
   number={1},
   pages={111--127},
}

\bib{euler}{article}{
   author={MacPherson, R. D.},
   title={Chern classes for singular algebraic varieties},
   journal={Ann. of Math. (2)},
   volume={100},
   date={1974},
   pages={423--432},
   issn={0003-486X},
}
  
\bib{MaTa}{article}{
   author={Matsui, Yutaka},
   author={Takeuchi, Kiyoshi},
   title={A geometric degree formula for $A$-discriminants and Euler
   obstructions of toric varieties},
   journal={Adv. Math.},
   volume={226},
   date={2011},
   number={2},
   pages={2040--2064},
}

\bib{Local}{article}{
   author={N\o dland, Bernt Ivar Utst\o l},
   title={Local Euler obstructions of toric varieties},
   journal={J. Pure Appl. Algebra},
   volume={222},
   date={2018},
   number={3},
   pages={508--533},
}

\bib{numchar}{article}{
   author={Piene, Ragni},
   title={Numerical characters of a curve in projective $n$-space},
   conference={
      title={Real and complex singularities},
      address={Proc. Ninth Nordic Summer School/NAVF Sympos. Math., Oslo},
      date={1976},
   },
   book={
      publisher={Sijthoff and Noordhoff, Alphen aan den Rijn},
   },
   date={1977},
   pages={475--495},
}

\bib{note}{article}{
   author={Piene, Ragni},
   title={A note on higher order dual varieties, with an application to
   scrolls},
   conference={
      title={Singularities, Part 2},
      address={Arcata, Calif.},
      date={1981},
   },
   book={
      series={Proc. Sympos. Pure Math.},
      volume={40},
      publisher={Amer. Math. Soc., Providence, RI},
   },
   isbn={0-8218-1466-4},
   date={1983},
   pages={335--342},
}

\bib{higher}{article}{
   author={Piene, Ragni},
   title={Higher order polar and reciprocal polar loci},
   conference={
      title={Facets of algebraic geometry. Vol. II},
   },
   book={
      series={London Math. Soc. Lecture Note Ser.},
      volume={473},
      publisher={Cambridge Univ. Press, Cambridge},
   },
   isbn={978-1-108-79251-6},
   isbn={978-1-108-87006-1},
   date={2022},
   pages={238--253},
}

\bib{BS}{book}{
    AUTHOR = {Sturmfels, Bernd},
     TITLE = {Gr\"{o}bner bases and convex polytopes},
    SERIES = {University Lecture Series},
    VOLUME = {8},
 PUBLISHER = {American Mathematical Society, Providence, RI},
     date = {1996},
    }

\end{biblist}

\end{document}